\documentclass[12pt]{article}

\usepackage[T1]{fontenc}
\usepackage[utf8]{inputenc}
\usepackage{authblk}
\usepackage{color}

\usepackage{amsmath, amssymb, amsfonts, amsbsy, amsthm, latexsym, color}
\usepackage{graphicx}
\usepackage{enumerate}
\everymath{\displaystyle}
\textwidth      6.5in
\oddsidemargin  0.0in
\evensidemargin 0.0in
\setlength{\topmargin}{.0in}
\setlength{\textheight}{8.5in}
\parindent=0 in
\parskip=.06 in
\newtheorem{theorem}{Theorem}[section]
\newtheorem{proposition}[theorem]{Proposition}

\newtheorem{corollary}[theorem]{Corollary}

\numberwithin{equation}{section}

\begin{document}

\title{Optimal Parseval frames: Total coherence and total volume}

\author[1]{Jameson Cahill}
\author[2]{Peter G. Casazza}
\affil[1]{Department of Mathematical Sciences, New Mexico State University} 
\affil[2]{Department of Mathematics, University of Missouri}

\date{}

\maketitle

\begin{abstract}
We introduce three quantities called total coherence, total volume, and nuclear energy, and we show that equiangular Parseval frames maximize all three of these quantities over the set of all Parseval frames. We then show that equiangular Parseval frames also maximize the total volume and nuclear energy over the set of equal norm frames. Along the way we derive a bound on the smallest k-dimensional volume in an equal norm frame which is a generalization of the Welch bound. We conclude with an extensive list of open questions.
\end{abstract}

\section{Introduction}

A \textit{finite frame} for an $N$-dimensional vector space $\mathbb{F}^N$ (where $\mathbb{F}$ can be either $\mathbb{R}$ or $\mathbb{C}$) is simply a finite collection of vectors $\Phi=\{\varphi_i\}_{i=1}^M$ that spans $\mathbb{F}^N$. By a slight abuse of notation we will use the same symbol $\Phi$ to refer to the $N\times M$ matrix whose $i$th column is $\varphi_i$. Two important operators associated with a given frame are the \textit{frame operator} $\Phi\Phi^*=\sum_{i\in[M]}\varphi_i\varphi_i^*$ and the \textit{Gram matrix} $\Phi^*\Phi$. Given a subset $K\subseteq [M]$ we denote by $\Phi_K$ the $N\times |K|$ matrix consisting of the columns of $\Phi$ indexed by $K$ and we define the \textit{partial frame operator} $\Phi_K\Phi_K^*=\sum_{i\in K}\varphi_i\varphi_i^*$.

A frame $\Phi$ is called a \textit{Parseval frame} if the frame operator is the identity operator, i.e., $\Phi\Phi^*=I$. It is easy to see that this is equivalent to the Gram matrix $\Phi^*\Phi$ being an orthogonal projection onto an $N$-dimensional subspace of $\mathbb{F}^M$. Two Parseval frames $\{\varphi_i\}_{i=1}^M$ and $\{\psi_i\}_{i=1}^M$ are called \textit{unitarily equivalent} if there is a unitary operator $U$ so that $\varphi_i=U\psi_i$ for every $i\in[M]$. It follows that two Parseval frames have the same Gram matrix if and only if they are unitarily equivalent. Furthermore, every $M\times M$ orthogonal projection of rank $N$ is the Gram matrix some Parseval frame for $\mathbb{F}^N$, so there is a one to one correspondence between unitary equivalence classes of Parseval frames for $\mathbb{F}^N$ and the Grassmannian of $N$-dimensional subspaces of $\mathbb{F}^M$ which we will denote by $Gr(M,N)$.

A frame $\Phi=\{\varphi_i\}_{i=1}^M$ is called \textit{equal norm} if $\|\varphi_i\|=\|\varphi_j\|$ for every $i$ and $j$. If $\Phi$ is also a Parseval frame then we know $\text{trace}(\Phi^*\Phi)=N$, and since the diagonal entries of $\Phi^*\Phi$ are $\|\varphi_i\|^2$ we see that $\|\varphi_i\|=\sqrt{N/M}$. An equal norm Parseval frame which further satisfies $|\langle \varphi_i,\varphi_j\rangle|=c$ for every $i\neq j$ and some constant $c$ is called \textit{equiangular}. In this case we have
\begin{eqnarray*}
N = \|\Phi^*\Phi\|_2^2 &=& \text{trace}(\Phi^*\Phi\Phi^*\Phi) \\
&=& \text{trace}(\Phi^*\Phi)=\sum_{i,j}|\langle\varphi_i,\varphi_j\rangle|^2 \\
&=& \sum_{i=1}^M\|\varphi_i\|^4+\sum_{i\neq j}|\langle\varphi_i,\varphi_j\rangle|^2 \\
&=&\frac{N^2}{M}+M(M-1)c^2,
\end{eqnarray*}
so we have
\begin{equation}\label{welch}
c_{M,N}:=c=\sqrt{\frac{N(M-N)}{M^2(M-1)}}.
\end{equation}

As a precaution to the reader we remark here that it is more common in the literature to study equiangular tight frames, which are the same object as an equiangular Parseval frame except the vectors are rescaled to be unit vectors. As we will see in this paper, equiangular Parseval frames tend to be optimal for a variety of applications. However they are not guaranteed to exist, and in fact for most pairs $(M,N)$ they do not exist. See \cite{FM15} for a list of known examples of equiangular Parseval frames.

If $\Phi=\{\varphi_i\}_{i=1}^M$ is a Parseval frame for $\mathbb{F}^N$, then any frame $\Psi=\{\psi_i\}_{i=1}^M$ for $\mathbb{F}^{M-N}$ which satisfies $\Psi^*\Psi=I-\Phi^*\Phi$ is called a \textit{Naimark complement} of $\Phi$ (note that Naimark complements are only defined up to unitary equivalence). It is clear from the definition that the properties of being equal norm or equiangular are preserved under Naimark complementation, since $\|\psi_i\|^2=1-\|\varphi_i\|^2$ and $\langle\psi_i,\psi_j\rangle=-\langle\varphi_i,\varphi_j\rangle$.

For more background on finite frames we refer to the book \cite{CK12}, particularly the first chapter, and the more recent book \cite{W18}.

This paper is organized as follows. In section \ref{tc} we introduce the concept of total coherence and pose a related optimization problem and record some results about the solutions to this problem. In section \ref{tv} we introduce the notion of total $k$-dimensional volume and in section \ref{ne} we introduce the $k$-nuclear energy as well as optimization problems related to these concepts over the set of Parseval frames. In section \ref{ens} we consider these same problems over the set of equal norm frames. In section \ref{dc} we pose an extensive list of questions. Some of these questions may be quite straightforward to answer and could be approachable by students while others are quite difficult and may be nearly impossible to answer completely.

\section{Total coherence}\label{tc}

Let $\mathcal{P}(M,N)$ denote the space of Parseval frames for $\mathbb{F}^N$ with $M$ vectors.  We would like to study the frames that solve the following optimization problem:
\begin{equation}\label{1}
\max_{\Phi\in\mathcal{P}(M,N)}\sum_{i,j}|\langle \varphi_i,\varphi_j\rangle|.
\end{equation}

Note that
\begin{eqnarray*}
\sum_{i,j}|\langle \varphi_i,\varphi_j\rangle|&=&\sum_{i=1}^M\|\varphi_i\|^2+\sum_{i\neq j}|\langle \varphi_i,\varphi_j\rangle| \\
&=& N+\sum_{i\neq j}|\langle \varphi_i,\varphi_j\rangle|
\end{eqnarray*}
so \eqref{1} is equivalent to
\begin{equation}\label{2}
\max_{\Phi\in\mathcal{P}(M,N)}\sum_{i\neq j}|\langle \varphi_i,\varphi_j\rangle|.
\end{equation}

For a Parseval frame $\Phi$ we call the quantity $TC(\Phi)=\sum_{i\neq j}|\langle\varphi_i,\varphi_j\rangle|$ the \textit{total coherence} of $\Phi$. One observation that we will make right away is that total coherence is preserved under unitary equivalence and Naimark complementation. We will state this as a proposition for later reference.

\begin{proposition}\label{tcnaimark}
If two Parseval frames $\Phi$ and $\Psi$ are Naimark complements then $TC(\Phi)=TC(\Psi)$.
\end{proposition}

An immediate consequence of the above proposition is that if $\Phi$ solves \eqref{2} for $\mathcal{P}(M,N)$ then any Naimark complement of $\Phi$ solves \eqref{2} for $\mathcal{P}(M,M-N)$. This means that for understanding the solutions to \eqref{2} we can always assume either $M\geq 2N$ or $N<M\leq 2N$ depending on which is more convenient.

\begin{theorem}\label{main}
If $\Phi=\{\varphi_i\}_{i=1}^M$ is an equiangular Parseval frame and $\Psi=\{\psi_i\}_{i=1}^M\in\mathcal{P}(M,N)$ then
$$
TC(\Psi)\leq TC(\Phi).
$$
Thus, when they exist, equiangular Parseval frames are precisely the solutions to \ref{2} (and equivalently \ref{1}).
\end{theorem}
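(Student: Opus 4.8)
The plan is to reduce everything to two applications of the Cauchy--Schwarz inequality after exploiting the single algebraic constraint that a Parseval frame's Gram matrix imposes. Writing $G=\Phi^*\Phi$, the Parseval condition means $G$ is an orthogonal projection of rank $N$, so $G=G^2$ and hence $\text{trace}(G)=\text{trace}(G^2)=N$. Expanding $\text{trace}(G^2)=\sum_{i,j}|G_{ij}|^2$ and separating diagonal from off-diagonal terms gives the key identity
\[
\sum_{i=1}^M\|\varphi_i\|^4+\sum_{i\neq j}|\langle\varphi_i,\varphi_j\rangle|^2=N,
\]
while $\text{trace}(G)=\sum_i\|\varphi_i\|^2=N$ records that the squared norms sum to $N$. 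I will use $\Psi$ for the arbitrary frame, but the same identities apply to any Parseval frame.

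First I would bound the off-diagonal sum of magnitudes by its sum of squares. There are $M(M-1)$ ordered off-diagonal pairs, so Cauchy--Schwarz together with the identity above gives
\[
\Big(\sum_{i\neq j}|\langle\varphi_i,\varphi_j\rangle|\Big)^2\le M(M-1)\sum_{i\neq j}|\langle\varphi_i,\varphi_j\rangle|^2 = M(M-1)\Big(N-\sum_i\|\varphi_i\|^4\Big).
\]
Next I would note that the right-hand side is largest when $\sum_i\|\varphi_i\|^4$ is smallest, and a second Cauchy--Schwarz applied to $\sum_i\|\varphi_i\|^2=N$ yields $\sum_i\|\varphi_i\|^4\ge N^2/M$. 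Substituting and simplifying $M(M-1)(N-N^2/M)=(M-1)N(M-N)$ produces
\[
TC(\Psi)\le\sqrt{(M-1)N(M-N)}.
\]

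To finish I would verify that an equiangular Parseval frame meets this bound with equality: such a frame is equal norm with $\|\varphi_i\|^2=N/M$ and has $|\langle\varphi_i,\varphi_j\rangle|=c_{M,N}$ for all $i\neq j$, so $TC(\Phi)=M(M-1)c_{M,N}$, and substituting the value from \eqref{welch} gives exactly $\sqrt{(M-1)N(M-N)}$. This establishes $TC(\Psi)\le TC(\Phi)$. The characterization of the maximizers then comes from tracking the equality conditions: equality in the first Cauchy--Schwarz forces all $|\langle\varphi_i,\varphi_j\rangle|$ to agree, and equality in the second forces all $\|\varphi_i\|^2$ to agree, which together are precisely the definition of an equiangular Parseval frame.

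The argument is essentially self-contained, and I do not anticipate a serious obstacle; the one place to be careful is the bookkeeping that links the two inequalities. Because the constraint couples $\sum_i\|\varphi_i\|^4$ and $\sum_{i\neq j}|\langle\varphi_i,\varphi_j\rangle|^2$ additively, minimizing the former simultaneously enlarges the budget available to the latter, so the two optimizations pull in the same direction rather than competing. Confirming that there is no hidden tension between them is the main thing to check before declaring the equal-norm, equiangular configuration to be exactly the equality case.
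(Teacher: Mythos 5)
Your proposal is correct and follows essentially the same route as the paper's own proof: the identity $\sum_i\|\psi_i\|^4+\sum_{i\neq j}|\langle\psi_i,\psi_j\rangle|^2=N$ coming from $\mathrm{trace}(G)=\mathrm{trace}(G^2)=N$, the bound $\sum_i\|\psi_i\|^4\geq N^2/M$, and a Cauchy--Schwarz step over the $M(M-1)$ off-diagonal pairs, with the equiangular value $M(M-1)c_{M,N}=\sqrt{N(M-N)(M-1)}$ saturating the resulting bound. Your explicit tracking of the two equality conditions (all $\|\psi_i\|^2$ equal, all $|\langle\psi_i,\psi_j\rangle|$ equal) is a small improvement in completeness, since the paper leaves the ``precisely'' clause of the theorem implicit.
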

\begin{proof}
First note that if $\Phi$ is an equiangular Parseval frame then by \eqref{welch}
$$
|\langle \varphi_i,\varphi_j\rangle|=\sqrt{\frac{N(M-N)}{M^2(M-1)}}
$$
whenever $i\neq j$, so
\begin{eqnarray*}
\sum_{i\neq j}|\langle\varphi_i,\varphi_j\rangle|&=& M(M-1)\sqrt{\frac{N(M-N)}{M^2(M-1)}} \\
&=&\sqrt{N(M-N)(M-1)}.
\end{eqnarray*}

Let $\{\psi_i\}_{i=1}^M\in\mathcal{P}(M,N)$. Then
\begin{eqnarray*}
N&=&\sum_{i,j}|\langle\psi_i,\psi_j\rangle|^2 \\
&=&\sum_{i=1}^M\|\psi_i\|^4+\sum_{i\neq j}|\langle\psi_i,\psi_j\rangle|^2.
\end{eqnarray*}

But we also have that $N=\sum_{i=1}^M\|\psi_i\|^2$ which means that $\sum_{i=1}^M\|\psi_i\|^4\geq \frac{N^2}{M}$ with equality if and only if $\Psi$ is equal norm, and so
\begin{eqnarray*}
\sum_{i\neq j}|\langle\psi_i,\psi_j \rangle|^2&\leq&N-\frac{N^2}{M} \\
&=& \frac{N(M-N)}{M}.
\end{eqnarray*}

Therefore
\begin{eqnarray*}
\sum_{i\neq j}|\langle \psi_i,\psi_j\rangle|&\leq&(M(M-1)\sum_{i\neq j}|\langle\psi_i,\psi_j\rangle|^2)^{\frac{1}{2}} \\
&\leq&(M(M-1)(\frac{N(M-N)}{M}))^{\frac{1}{2}} \\
&=&\sqrt{N(M-N)(M-1)} \\
&=&\sum_{i\neq j}|\langle\varphi_i,\varphi_j\rangle|.
\end{eqnarray*}
\end{proof}

Note that the key ingredient in the above proof is the following property of equal norm Parseval frames: Although the $\ell^2$-energy of the Gram matrix of a Parseval frame is constant for all Parseval frames in $\mathcal{P}(M,N)$, this energy is not equally distributed between the diagonal and the off-diagonal entries of the Gram matrix. Equal norm Parseval frames minimize the amount of this energy that is on the diagonal (i.e., the norms of the vectors) and therefore maximizes the amount that is in the off-diagonal entries (i.e., inner products between different vectors). This proof should be compared to the derivation of the Welch bound \cite{W74} (see also \cite{DHC12}).

\begin{proposition}\label{tcbound}
If $\Psi$ is a Parseval frame that solves \eqref{2} then
$$
\max\{N,M-N\}\leq TC(\Psi)\leq\sqrt{N(M-N)(M-1)}.
$$
\end{proposition}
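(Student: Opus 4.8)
The plan is to treat the two inequalities completely separately, since they come from different sources. The upper bound is essentially already proven. Indeed, the chain of inequalities in the proof of Theorem \ref{main} never uses that $\Psi$ is equiangular or even equal norm; the estimate
$$\sum_{i\neq j}|\langle\psi_i,\psi_j\rangle| \le \Bigl(M(M-1)\sum_{i\neq j}|\langle\psi_i,\psi_j\rangle|^2\Bigr)^{1/2} \le \sqrt{N(M-N)(M-1)}$$
holds for \emph{every} $\Psi\in\mathcal{P}(M,N)$. Hence in particular it holds for a solution of \eqref{2}, giving $TC(\Psi)\le\sqrt{N(M-N)(M-1)}$ with no further work.

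For the lower bound I would use the defining property of a solution: since $\Psi$ maximizes $TC$ over $\mathcal{P}(M,N)$, it suffices to exhibit \emph{some} Parseval frame in $\mathcal{P}(M,N)$ whose total coherence is at least $\max\{N,M-N\}$, and then $TC(\Psi)$ must dominate it. The natural candidate is a ``clustered'' frame: fix an orthonormal basis $\{e_a\}_{a=1}^N$, choose integers $m_a\ge 1$ with $\sum_a m_a=M$ (possible since $M\ge N$), and for each $a$ place $m_a$ copies of $m_a^{-1/2}e_a$. One checks immediately that the frame operator is $\sum_a e_ae_a^*=I$, so this is a Parseval frame, and that the only nonzero inner products are the intra-cluster pairs, of which there are $m_a(m_a-1)$ in cluster $a$ each of value $1/m_a$, so that $TC=\sum_a(m_a-1)=M-N$. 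This already gives $TC(\Psi)\ge M-N$.

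To upgrade $M-N$ to $\max\{N,M-N\}$ I would invoke Naimark complementation. By Proposition \ref{tcnaimark}, the maximum of $TC$ over $\mathcal{P}(M,N)$ equals its maximum over $\mathcal{P}(M,M-N)$. Running the same clustered construction in dimension $M-N$ produces a Parseval frame in $\mathcal{P}(M,M-N)$ with total coherence $M-(M-N)=N$; its Naimark complement then lies in $\mathcal{P}(M,N)$ and has the same total coherence $N$ by Proposition \ref{tcnaimark}. Thus $TC(\Psi)\ge N$ as well, and combining the two estimates yields $TC(\Psi)\ge\max\{N,M-N\}$.

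The only genuinely delicate point is bookkeeping rather than a real obstacle: the argument needs $M-N\ge 1$ for the complementary frame to be nondegenerate, so the bound $TC(\Psi)\ge N$ is meaningful precisely when $M>N$ (when $M=N$ the only frames are orthonormal bases and both sides collapse). Everything else reduces to the elementary coherence computation for the clustered frame, which I would not expect to present more than schematically.
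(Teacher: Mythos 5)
Your proof is correct, and the lower bound is handled by a genuinely different mechanism than the paper's. Both proofs get the upper bound the same way (the Cauchy--Schwarz chain in Theorem \ref{main} holds for every Parseval frame, as you note), and both use Proposition \ref{tcnaimark} in the same way to upgrade a bound of $M-N$ to $\max\{N,M-N\}$. The difference is the witness frame: you exhibit one explicit ``clustered'' Parseval frame ($m_a$ copies of $m_a^{-1/2}e_a$ with $\sum_a m_a = M$) and compute $TC = \sum_a(m_a-1) = M-N$ exactly, whereas the paper proves the universal statement that \emph{every} equal norm Parseval frame $\Phi$ satisfies $TC(\Phi)\geq M-N$, via the estimate
$$
\frac{N}{M}=\sum_{j=1}^M|\langle\varphi_i,\varphi_j\rangle|^2\leq\frac{N}{M}\sum_{j=1}^M|\langle\varphi_i,\varphi_j\rangle|,
$$
which gives $\sum_j|\langle\varphi_i,\varphi_j\rangle|\geq 1$ for each $i$. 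Your construction is more elementary and self-contained (an exact computation, no inequality needed), and it shows this particular witness can do no better than $M-N$; note also that when $N\mid M$ your frame can be made equal norm, showing the paper's equal-norm bound is actually attained. What the paper's argument buys is the stronger universal fact, which the authors explicitly rely on later: the remark following the proposition and the open question in Section \ref{dc} both quote the bound $TC(\Phi)\geq\max\{N,M-N\}$ as holding for \emph{any} equal norm Parseval frame, not merely for one example. Your handling of the $M=N$ degeneracy is honest and matches the paper's implicit standing assumption $M>N$ (though strictly speaking, when $M=N$ the stated lower bound $N$ simply fails rather than ``collapses,'' since an orthonormal basis has $TC=0$; the paper's proof degenerates there too).
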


\begin{proof}
The upper bound follows from Theorem \ref{main}. For the lower bound let $\Phi=\{\varphi_i\}_{i=1}^M$ be any equal norm Parseval frame. Then for a fixed $i$ we have
\begin{align*}
\frac{N}{M}&= \sum_{j=1}^M|\langle \varphi_i,\varphi_j\rangle|^2\\
&\le \sum_{j=1}^M\|\varphi_i\|\|\varphi_j\||\langle \varphi_i,
\varphi_j\rangle|\\
&= \frac{N}{M}\sum_{j=1}^M|\langle \varphi_i,\varphi_j\rangle|,
\end{align*}
so $\sum_{j=1}^M|\langle\varphi_i,\varphi_j\rangle|\geq 1$. Since this is true for every $i$ we have $\sum_{i,j}|\langle\varphi_i,\varphi_j\rangle|\geq M$ which means $TC(\Phi)\geq M-N$. 

If $\Gamma$ is a Naimark complement of $\Phi$ then by the same argument $TC(\Gamma)\geq M-(M-N)=N$, but by Proposition \ref{tcnaimark} $TC(\Phi)=TC(\Gamma)$.

Finally, since $\Psi$ is a solution to \eqref{2} we know $TC(\Psi)\geq TC(\Phi)$.
\end{proof}

Note that to prove the lower bound in the above proposition we just used an arbitrary equal norm Parseval frame. Given that we know that when they exist equiangular Parseval frames are the solutions to \eqref{2} it is natural to ask whether the solutions to \eqref{2} are always equal norm. While we do not have a proof of this (and it is quite possible that it is not true) we can still show that there has to be some control on the norms of the vectors in a Parseval frame that solves \eqref{2}.

\begin{theorem}\label{tcnorm}
For each $M>N$ there exist constants $0<c\leq d<1$ (depending only on $M$ and $N$) such that if $\Phi=\{\varphi_i\}_{i=1}^M$ solves \eqref{2} then $c\leq\|\varphi_i\|\leq d$ for every $i=1,...,M$.
\end{theorem}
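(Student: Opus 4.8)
The plan is to combine a compactness argument with a local perturbation that rules out extremal norms. First I would observe that, after identifying $\Phi$ with the $N\times M$ matrix having orthonormal rows (the condition $\Phi\Phi^*=I$), the feasible set $\mathcal{P}(M,N)$ is compact and $TC$ is continuous on it, so the set $\mathcal{S}$ of maximizers of \eqref{2} is nonempty and compact. Since $\min_i\|\varphi_i\|$ and $\max_i\|\varphi_i\|$ are continuous functions of $\Phi$, they attain their extreme values over $\mathcal{S}$; setting $c=\min_{\Phi\in\mathcal{S}}\min_i\|\varphi_i\|$ and $d=\max_{\Phi\in\mathcal{S}}\max_i\|\varphi_i\|$ one automatically gets $c\le d$ and $c\le\|\varphi_i\|\le d$ for every maximizer. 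Thus the whole theorem reduces to the single claim that \emph{no} maximizer can have a vector of norm $0$ or $1$, for then $c>0$ and $d<1$.

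To prove the claim I would pass to the Gram matrix $G=\Phi^*\Phi$, an orthogonal projection onto an $N$-dimensional subspace $V\subseteq\mathbb{F}^M$, and use $\langle\varphi_i,\varphi_j\rangle=G_{ij}=(P_V)_{ij}$ and $\|\varphi_i\|^2=\|P_Ve_i\|^2$. A vector of norm $0$ means $e_{i_0}\perp V$. Fixing any unit vector $w\in V$ (possible since $N\ge1$), I would tilt $w$ toward $e_{i_0}$, i.e. set $w_\theta=\cos\theta\,w+\sin\theta\,e_{i_0}$ and $V_\theta=\operatorname{span}(w_\theta)\oplus(V\cap w^\perp)$, which is $N$-dimensional with $P_{V_\theta}=P_{V\cap w^\perp}+w_\theta w_\theta^*$, so $\{V_\theta\}$ stays inside the constraint set. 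Since $e_{i_0}\perp V$ forces $w_{i_0}=0$ and kills the $i_0$-row of $P_{V\cap w^\perp}$, a direct computation gives $(P_{V_\theta})_{i_0 j}=\sin\theta\cos\theta\,\overline{w_j}$ for $j\ne i_0$, whereas every off-diagonal entry with $i,j\ne i_0$ changes only by $-\sin^2\theta\,w_i\overline{w_j}$. Hence the $i_0$ row and column contribute a first-order gain $2|\sin\theta\cos\theta|\,\|w\|_1$ to $TC$, where $\|w\|_1=\sum_j|w_j|$, while the remaining entries change by at most $\sin^2\theta\,\|w\|_1^2$ in total; as $\|w\|_1>0$, for small $\theta>0$ the net change is strictly positive, contradicting maximality.

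For a vector of norm $1$ I would invoke Naimark complementation: if $\Psi$ is a Naimark complement of $\Phi$ then $\|\psi_{i_0}\|^2=1-\|\varphi_{i_0}\|^2=0$, and by Proposition \ref{tcnaimark} together with the remark that a Naimark complement of a solution to \eqref{2} is again a solution (here in $\mathcal{P}(M,M-N)$, which is legitimate since $M>N$), $\Psi$ would be a maximizer with a zero vector, contradicting the norm-$0$ case already handled. This closes the claim and hence the theorem. The main obstacle I expect is the perturbation step: one must choose a variation that provably remains in $\mathcal{P}(M,N)$ and yet moves $TC$ at first order despite the nonsmoothness of $|\cdot|$; the rotation $V_\theta$ is designed precisely so that the new $i_0$-entries appear linearly in $\theta$ while the competing losses are quadratic, making the sign of the first-order term unambiguous.
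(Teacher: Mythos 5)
Your proposal is correct, and its central step differs genuinely from the paper's. The packaging is the same in both: compactness of the solution set plus continuity of the norm functions reduces everything to showing that no maximizer contains a vector of norm $0$ or $1$, and the norm-$1$ case is handled identically in both arguments via Naimark complementation (a unit vector in $\Phi$ becomes a zero vector in $\Psi\in\mathcal{P}(M,M-N)$, which is again a solution by Proposition \ref{tcnaimark}). Where you diverge is the norm-$0$ case. The paper uses a discrete, purely frame-side trick: if $\psi_M=0$ and $\psi_{M-1}\neq 0$, replace both by two copies of $\psi_{M-1}/\sqrt{2}$; this preserves $\sum\gamma_i\gamma_i^*$ and a one-line comparison shows $TC$ strictly increases, since the off-diagonal terms involving index $M-1$ get multiplied by $2\sqrt{2}/2>1$ and a new positive term $\|\psi_{M-1}\|^2$ appears. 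Your argument instead works on the Grassmannian side, rotating the range $V$ of the Gram projection toward $e_{i_0}$ via $w_\theta=\cos\theta\,w+\sin\theta\,e_{i_0}$; I checked the entry computations and they are right: since $w_{i_0}=0$ and the $i_0$ row of $P_{V\cap w^\perp}$ vanishes, the new $i_0$-row entries are exactly $\sin\theta\cos\theta\,\overline{w_j}$, so the gain $2\sin\theta\cos\theta\,\|w\|_1\geq 2\sin\theta\cos\theta$ (as $\|w\|_1\geq\|w\|_2=1$) is genuinely first order, while the losses elsewhere are bounded by $\sin^2\theta\,\|w\|_1^2$, and your observation that the new entries grow from zero sidesteps the nondifferentiability of $|\cdot|$. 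What each approach buys: the paper's duplication argument is more elementary --- one explicit finite comparison, no limiting or variational reasoning --- but it is a rigid construction (it produces parallel vectors and says nothing about directions of improvement); your one-parameter rotation is a smooth curve in $Gr(M,N)$ with an explicit positive first-order slope, which is the kind of variational machinery that could plausibly be adapted to the paper's related open questions (e.g., perturbing away other degenerate configurations of maximizers), at the modest cost of requiring the projection-entry bookkeeping you carried out. One small point worth making explicit if you write this up: in the unit-norm step you should note $M-N\geq 1$ (true since $M>N$) so that the zero-vector argument applies in $\mathcal{P}(M,M-N)$, i.e., a unit vector $w$ in the complementary range exists.
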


\begin{proof}
Assume $\Psi=\{\psi_i\}_{i=1}^M$ is a Parseval frame with $\psi_M=0$ and $\psi_{M-1}\neq 0$.  Define $\Gamma=\{\gamma_i\}_{i=1}^M$ by:
\[ \gamma_i = \begin{cases}
\psi_i \mbox{ if } i=1,2,\ldots,M-2\\
\frac{\psi_{M-1}}{\sqrt{2}}\mbox{ if } i=M-1,M.
\end{cases}\]
It is easy to see that $\Gamma$ is a Parseval frame since $\sum\gamma_i\gamma_i^*=\sum\psi_i\psi_i^*$. We have
\begin{align*}
TC(\Gamma)&=\sum_{\substack{i,j=1 \\ i\neq j}}^{M-2}|\langle\gamma_i,\gamma_j\rangle|+2|\langle\gamma_{M-1},\gamma_M\rangle|+2\sum_{i=1}^{M-2}(|\langle\gamma_{M-1},\gamma_i\rangle|+|\langle\gamma_M,\gamma_i\rangle|) \\
&=\sum_{\substack{i,j=1 \\ i\neq j}}^{M-2}|\langle\psi_i,\psi_j\rangle|+2|\langle\frac{\psi_{M-1}}{\sqrt{2}},\frac{\psi_{M-1}}{\sqrt{2}}\rangle|+4\sum_{i=1}^{M-2}|\langle\frac{\psi_{M-1}}{\sqrt{2}},\psi_i\rangle| \\
&=\sum_{\substack{i,j=1 \\ i\neq j}}^{M-2}|\langle\psi_i,\psi_j\rangle|+\|\psi_{M-1}\|^2+2\sqrt{2}\sum_{i=1}^{M-2}|\langle\psi_{M-1},\psi_i\rangle| \\
&>\sum_{\substack{i,j=1 \\ i\neq j}}^{M-2}|\langle\psi_i,\psi_j\rangle|+2\sum_{i=1}^{M-2}|\langle\psi_{M-1},\psi_i\rangle|=TC(\Psi).
\end{align*}
This shows that $\|\varphi_i\|>0$. To see that $\|\varphi_i\|<1$ observe that a Parseval frame contains a unit vector if and only if the corresponding vector in any Naimark complement is the zero vector, and then apply Proposition \ref{tcnaimark}.

Let $TC(M,N)$ denote the set of Gram matrices of Parsevals frames that solve \eqref{2}. $TC(M,N)$ is a closed subset of $Gr(M,N)$ since it is the preimage of the optimal value of the continuous function $TC(\Phi)$, and since $Gr(M,N)$ is compact it follows that $TC(M,N)$ is also compact. Now consider the value $\|\varphi_1\|^2$, which is just the top left entry of the Gram matrix and is therefore a continuous function on $Gr(M,N)$ so it achieves a maximum and a minimum on $TC(M,N)$. Also note that $\{\varphi_i\}_{i=1}^M\in TC(M,N)$ if and only if $\{\varphi_{\sigma(i)}\}_{i=1}^M$ for any permutation $\sigma$ of the index set $[M]$. Let
$$
c^2=\min_{\Phi\in TC(M,N)}\|\varphi_1\|^2
$$
and
$$
d^2=\max_{\Phi\in TC(M,N)}\|\varphi_1\|^2.
$$
\end{proof}

Given a Parseval frame $\Phi$ consider the following quantity:
\begin{equation*}\label{E1}
 EAD(\Phi)=\sum_{i=1}^M (\|\varphi_i\|^2- \frac{N}{M})^2+
\sum_{i\not= j}(|\langle \varphi_i,\varphi_j\rangle|-c_{M,N})^2
\end{equation*}
where $c_{M,N}$ is the constant derived in \eqref{welch}. We call this quantity the \textit{equiangular distance} of $\Phi$ since it is the Froebenius distance between the Gram matrix of $\Phi$ and the Gram matrix of a (possibly nonexistent) equiangular Parseval frame. Note that $\Phi$ is an equiangular Parseval frame if and only if $EAD(\Phi)=0$, but we know that in many cases there are no equiangular Parseval frames, so by minimizing this quantity we should find the Parseval frame that is "closest" to being equiangular. So we pose the following minimization problem:
\begin{equation}\label{EAD}
\min_{\Phi\in \mathcal{P}(M,N)}EAD(\Phi).
\end{equation}

Since $\Phi$ is Parseval we know $\sum\|\varphi_i\|^2=\text{trace}\Phi^*\Phi=N$ so the first sum in $EAD(\Phi)$ is precisely the variance in the square norms of the frame vectors and this term is equal to $0$ if and only if $\Phi$ is equal norm. However, the second sum does not represent the variance in the magnitudes of the off-diagonal entries of the Gram matrix unless $\Phi$ is equiangular. Therefore we also pose the following problem:
\begin{equation}\label{var}
\min_{\Phi\in \mathcal{P}(M,N)}V(\Phi):=\sum_{i=1}^M (\|\varphi_i\|^2- \frac{N}{M})^2+
\sum_{i\not= j}(|\langle \varphi_i,\varphi_j\rangle|-c_{\Phi})^2
\end{equation}
where
$$
c_{\Phi}=\frac{TC(\Phi)}{M(M-1)}
$$
which is the mean of the off-diagonal entries. The solutions to this problem minimize the sum of the variance of the diagonal entries and the variance of the magnitudes of the off-diagonal entries of the Gram matrix.

\begin{proposition}
\eqref{2},\eqref{EAD}, and \eqref{var} all have the same solutions.
\end{proposition}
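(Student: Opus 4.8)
The plan is to show that both objective functions $EAD(\Phi)$ and $V(\Phi)$ depend on $\Phi$ only through $TC(\Phi)$, and in fact each is an explicit decreasing function of $TC(\Phi)$ (affine in one case, a negative quadratic in the other), up to additive and positive multiplicative constants that are independent of $\Phi$. Once this is established, minimizing $EAD$ or $V$ over $\mathcal{P}(M,N)$ is the same as maximizing $TC(\Phi)$, which is exactly problem \eqref{2}, and so all three problems share the same set of minimizers/maximizers.

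The key ingredient I would use is the identity already exploited in Theorem~\ref{main}: for every $\Phi\in\mathcal{P}(M,N)$ the Gram matrix $\Phi^*\Phi$ is a rank-$N$ orthogonal projection, so its squared Frobenius norm equals $N$, which gives
$$\sum_{i=1}^M\|\varphi_i\|^4+\sum_{i\neq j}|\langle\varphi_i,\varphi_j\rangle|^2=N.$$
Thus the total $\ell^2$-energy of the diagonal and off-diagonal entries of the Gram matrix is constant over $\mathcal{P}(M,N)$. Combined with the Parseval trace identity $\sum_i\|\varphi_i\|^2=N$, the diagonal contribution $\sum_i(\|\varphi_i\|^2-N/M)^2$ common to both $EAD$ and $V$ expands to $\sum_i\|\varphi_i\|^4-N^2/M$.

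For $EAD$ I would expand the off-diagonal term as $\sum_{i\neq j}|\langle\varphi_i,\varphi_j\rangle|^2-2c_{M,N}\,TC(\Phi)+M(M-1)c_{M,N}^2$, noting that by \eqref{welch} the last summand is a constant. Adding the diagonal contribution and using the Frobenius identity above to replace $\sum_i\|\varphi_i\|^4+\sum_{i\neq j}|\langle\varphi_i,\varphi_j\rangle|^2$ by $N$ collapses everything except the linear term, yielding $EAD(\Phi)=\text{(const)}-2c_{M,N}\,TC(\Phi)$. Since $c_{M,N}>0$, minimizing $EAD$ is exactly maximizing $TC$, so \eqref{EAD} and \eqref{2} have the same solutions.

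For $V$ the computation is analogous, the only difference being that the off-diagonal mean $c_\Phi=TC(\Phi)/(M(M-1))$ now depends on $\Phi$. Expanding $\sum_{i\neq j}(|\langle\varphi_i,\varphi_j\rangle|-c_\Phi)^2$ and using $\sum_{i\neq j}|\langle\varphi_i,\varphi_j\rangle|=M(M-1)c_\Phi$ makes the cross term absorb the constant term, leaving $\sum_{i\neq j}|\langle\varphi_i,\varphi_j\rangle|^2-TC(\Phi)^2/(M(M-1))$; applying the Frobenius identity once more gives $V(\Phi)=\text{(const)}-TC(\Phi)^2/(M(M-1))$, so minimizing $V$ is maximizing $TC(\Phi)^2$, equivalently $TC(\Phi)$ since $TC\geq 0$. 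There is no real obstacle here: the entire content is the observation that the squared-entry energy of the Gram matrix is frame-independent, which forces both objectives to be functions of $TC(\Phi)$ alone. The only care needed is the bookkeeping of constants and the remark that in the $V$ case it is $TC(\Phi)^2$ rather than $TC(\Phi)$ that appears, which nonetheless yields the same optimizers because total coherence is nonnegative.
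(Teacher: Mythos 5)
Your proof is correct and follows essentially the same route as the paper's: both expand $EAD(\Phi)$ and $V(\Phi)$, use the Parseval trace identity $\sum_i\|\varphi_i\|^2=N$ together with the constancy of the squared Frobenius norm of the Gram matrix to collapse each objective to $\mathrm{const}-2c_{M,N}\,TC(\Phi)$ and $\mathrm{const}-TC(\Phi)^2/(M(M-1))$ respectively. Your closing remark that maximizing $TC(\Phi)^2$ is equivalent to maximizing $TC(\Phi)$ because $TC\geq 0$ is a small point the paper leaves implicit, and is a welcome bit of care.
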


\begin{proof}

We compute:
\begin{align*}
EAD(\Phi)&= \sum_{i=1}^M\|\varphi_i\|^4 -\frac{2N}{M}\sum_{i=1}^M\|\varphi_i\|^2 + \frac{N^2}{M} + \sum_{i\not= j}|\langle \varphi_i,\varphi_j\rangle|^2
-2c_{M,N} \sum_{i\not= j}|\langle \varphi_i,\varphi_j\rangle|
+ M(M-1)c_{M,N}^2\\
&= \sum_{i=1}^M\|\varphi_i\|^4 + \sum_{i\not= j}|\langle \varphi_i,\varphi_j\rangle|^2 - \frac{N^2}{M}+M(M-1)c_{M,N}^2-2c_{M,N}TC(\Phi) \\
&= N - \frac{N^2}{M}+M(M-1)c_{M,N}^2-2c_{M,N}TC(\Phi).
\end{align*}
Since $c_{M,N}$ only depends on $M$ and $N$ but not on $\Phi$ we see that $EAD(\Phi)$ is minimized precisely when $TC(\Phi)$ is maximized. This shows that \eqref{2} and \eqref{EAD} have the same solutions.

By a similar calculation we see that
\begin{align*}
V(\Phi) &=N-\frac{N^2}{M}+M(M-1)c_{\Phi}^2-2c_{\Phi}TC(\Phi) \\
&=\frac{N(M-N)}{M}+\frac{TC(\Phi)^2}{M(M-1)}-\frac{2TC(\Phi)^2}{M(M-1)} \\
&=\frac{N(M-N)}{M}-\frac{TC(\Phi)^2}{M(M-1)}.
\end{align*}
This is also minimized precisely when $TC(\Phi)$ is maximized so this shows that \eqref{2} and \eqref{var} have the same solutions.
\end{proof}

Note that since $V(\Phi)$ must be nonnegative the last line of the above proof shows that $TC(\Phi)\leq\sqrt{N(M-N)(M-1)}$ providing an alternative proof to Theorem \ref{main}.

Finally, we consider the problem
\begin{equation}\label{AD}
\min_{\Phi\in \mathcal{P}(M,N)} AD(\Phi):=\sum_{i\not= j}(|\langle \varphi_i,\varphi_j\rangle|-c_{M,N})^2
\end{equation}
where $c_{M,N}$ is given in \eqref{welch}.

\begin{proposition}
If $\Phi$ is a solution to \eqref{AD} which is equal norm then $\Phi$ is also a solution to \eqref{EAD} (and equivalently \eqref{2} and \eqref{var}) and all solutions to \eqref{EAD} (and equivalently \eqref{2} and \eqref{var}) are equal norm.
\end{proposition}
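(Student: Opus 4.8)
The plan is to exploit the fact that $EAD$ and $AD$ differ only by the diagonal variance term, and then run a short comparison argument. Writing $V_d(\Phi)=\sum_{i=1}^M(\|\varphi_i\|^2-\frac{N}{M})^2$, a direct comparison of the two definitions gives the pointwise identity $EAD(\Phi)=AD(\Phi)+V_d(\Phi)$. Since $V_d(\Phi)\ge 0$ with equality precisely when $\Phi$ is equal norm, we obtain $EAD(\Phi)\ge AD(\Phi)$ for every $\Phi\in\mathcal{P}(M,N)$, and hence $\min_{\Phi}EAD(\Phi)\ge\min_{\Phi}AD(\Phi)$.

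First I would establish the first conclusion. Let $\Phi$ be the hypothesized equal-norm solution to \eqref{AD}. Because $\Phi$ is equal norm we have $V_d(\Phi)=0$, so $EAD(\Phi)=AD(\Phi)=\min_{\Phi}AD(\Phi)$. This forces $\min_{\Phi}EAD(\Phi)\le\min_{\Phi}AD(\Phi)$, and combined with the reverse inequality above we conclude that the two optimal values coincide and that $\Phi$ attains this common minimum. Thus $\Phi$ solves \eqref{EAD}, and by the previous proposition it also solves \eqref{2} and \eqref{var}.

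Next I would show that every solution to \eqref{EAD} is equal norm. Suppose $\Psi$ solves \eqref{EAD}. Using the identity together with $AD(\Psi)\ge\min_{\Phi}AD(\Phi)=\min_{\Phi}EAD(\Phi)=EAD(\Psi)$, we get
$$
EAD(\Psi)=AD(\Psi)+V_d(\Psi)\ge EAD(\Psi)+V_d(\Psi),
$$
which forces $V_d(\Psi)\le 0$. Since $V_d(\Psi)\ge 0$, we conclude $V_d(\Psi)=0$, i.e.\ $\Psi$ is equal norm.

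The argument is essentially a one-line minimax comparison, so I do not expect a real technical obstacle; the only calculation is the trivial splitting $EAD=AD+V_d$. The point I would emphasize is that both conclusions are genuinely \emph{conditional} on the hypothesis that \eqref{AD} admits an equal-norm minimizer: it is exactly this assumption that collapses the two optimal values $\min_{\Phi}EAD(\Phi)$ and $\min_{\Phi}AD(\Phi)$, and without it the assertion that all solutions to \eqref{EAD} (equivalently \eqref{2}) are equal norm would conflict with the open problem discussed after Theorem \ref{tcnorm}.
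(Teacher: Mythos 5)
Your proposal is correct and is essentially the paper's own argument: both hinge on the splitting $EAD(\Phi)=AD(\Phi)+\sum_{i=1}^M(\|\varphi_i\|^2-\tfrac{N}{M})^2$, so that $EAD\ge AD$ pointwise with equality exactly for equal norm frames, combined with the two optimality hypotheses. The paper merely packages your collapse of the optimal values as the single chain $AD(\Phi)\le AD(\Psi)\le EAD(\Psi)\le EAD(\Phi)=AD(\Phi)$, forcing all four quantities to coincide, which is the same reasoning in a slightly different order.
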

\begin{proof}
First note that for any Parseval frame $\Phi$, $EAD(\Phi)=AD(\Phi)$ if and only if $\Phi$ is equal norm. Let $\Phi$ be an equal norm solution to \eqref{AD} and let $\Psi$ be any solution to \eqref{EAD}. We have
$$
AD(\Phi)\leq AD(\Psi)
$$
by assumption. We also have that
$$
AD(\Psi)\leq EAD(\Psi)\leq EAD(\Phi)=AD(\Phi),
$$
where the first inequality follows from the definitions of $AD$ and $EAD$, the second from our assumption that $\Psi$ is a solution to \eqref{EAD}, and the last equality since $\Phi$ is equal norm. Together these inequalities imply that these quantities are all equal, so $EAD(\Phi)=EAD(\Psi)$ means that $\Phi$ is also a solution to \eqref{EAD}, and $AD(\Psi)=EAD(\Psi)$ means that $\Psi$ is equal norm.
\end{proof}

We remark here that in order to prove the above result we need to consider the quantity $AD(\Phi)$ which is not the variance of the magnitudes of the off-diagonal entries of the Gram matrix. To illustrate this consider the Parseval frame $\Phi=\{\varphi_i\}_{i=1}^M$ where $\{\varphi_i\}_{i=1}^N$ is an orthonormal basis for $\mathbb{F}^N$ and $\varphi_{N+1}=\cdots =\varphi_M=0$. For such a frame we have $TC(\Phi)=0$ and therefore $V(\Phi)=N(M-N)/M$ which is the biggest that $V$ can possibly be. However, when considering the two sums in the definition of $V(\Phi)$ we see that this frame gets all of its variance from the first term which corresponds to the variance in the norms of the vectors. Therefore if we tried to minimize the second term in $V$ then this frame would be a solution to that problem since $\langle\varphi_i,\varphi_j\rangle=0$ for every $i\neq j$.

\section{Total volume}\label{tv}

Given a collection of vectors $\{f_i\}_{i=1}^k\subseteq\mathbb{F}^N$ with $k\leq N$ the \textit{k-dimensional volume} of the parallelotope determined by these vectors is
$$
v_k(\{f_i\}_{i=1}^k)=v_k(F)=\sqrt{\det(F^*F)}=\prod_{i=1}^k\sigma_i(F)
$$
where $F$ is the $N\times k$ matrix with columns $\{f_i\}_{i=1}^k$ and $\sigma_i(F)$ denotes the singular values of $F$ in decreasing order. This volume is $0$ if and only if the vectors are linearly dependent in which case $\sigma_k(F)=0$ and they do not determine a k-dimensional parallelotope.

Given a Parseval frame $\Phi$ for $\mathbb{F}^N$ and $k\leq N$ we define the \textit{total k-dimensional volume} of $\Phi$ by
$$
V_k(\Phi)=\sum_{|K|=k}v_k(\Phi_K),
$$
and we pose the following optimization problem:
\begin{equation}\label{volume}
\max_{\Phi\in \mathcal{P}(M,N)}V_k(\Phi).
\end{equation}

\begin{proposition}\label{prodcos}
If $\Phi\in\mathcal{P}(M,N)$ then
$$
\sum_{|K|=k}v_k^2(\Phi_K)= {N\choose k}.
$$
\end{proposition}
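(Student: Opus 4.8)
The plan is to recognize $v_k^2(\Phi_K)=\det(\Phi_K^*\Phi_K)$ as a principal minor of the Gram matrix and then sum these minors using the elementary symmetric functions of its eigenvalues. Writing $G=\Phi^*\Phi$, the matrix $\Phi_K^*\Phi_K$ is exactly the principal $k\times k$ submatrix $G[K]$ of $G$ obtained by keeping the rows and columns indexed by $K$. Hence $v_k^2(\Phi_K)=\det G[K]$, and the quantity to compute is $\sum_{|K|=k}\det G[K]$, the sum of all principal $k\times k$ minors of $G$.

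First I would invoke the standard identity that for any $M\times M$ matrix $G$ with eigenvalues $\lambda_1,\dots,\lambda_M$, the sum of its principal $k\times k$ minors equals the $k$-th elementary symmetric polynomial $e_k(\lambda_1,\dots,\lambda_M)$. This is read off from the characteristic polynomial, in which the coefficient of $\lambda^{M-k}$ is simultaneously $(-1)^k e_k(\lambda_i)$ and $(-1)^k\sum_{|K|=k}\det G[K]$. Because $\Phi$ is Parseval, the excerpt notes that $G=\Phi^*\Phi$ is an orthogonal projection of rank $N$, so its eigenvalues are $1$ with multiplicity $N$ and $0$ with multiplicity $M-N$. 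Evaluating on this spectrum, $e_k$ counts the $k$-subsets of eigenvalues with nonzero product, i.e.\ the $k$-subsets of the $N$ ones, giving $\sum_{|K|=k}\det G[K]=e_k(\underbrace{1,\dots,1}_{N},\underbrace{0,\dots,0}_{M-N})=\binom{N}{k}$, as claimed.

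An alternative that uses the defining relation $\Phi\Phi^*=I_N$ more directly is a double application of the Cauchy--Binet formula. Expanding $\det(\Phi_K^*\Phi_K)$ over the $k\times k$ minors of the $N\times k$ matrix $\Phi_K$ gives $v_k^2(\Phi_K)=\sum_{S\subseteq[N],\,|S|=k}|\det\Phi[S,K]|^2$, where $\Phi[S,K]$ is the submatrix of $\Phi$ on rows $S$ and columns $K$. Swapping the order of summation yields $\sum_{|K|=k}v_k^2(\Phi_K)=\sum_{|S|=k}\sum_{|K|=k}|\det\Phi[S,K]|^2$, and a second application of Cauchy--Binet recognizes the inner sum as $\det\!\big(\Phi[S,\cdot]\,\Phi[S,\cdot]^*\big)$, the determinant of the principal $k\times k$ submatrix of $\Phi\Phi^*=I_N$, which equals $1$. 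Summing $1$ over the $\binom{N}{k}$ choices of $S$ reproduces the result.

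Since the statement follows from standard tools, there is no serious obstacle; the only thing requiring care is the index bookkeeping. In the Cauchy--Binet route one must keep the two index sets straight (rows drawn from $[N]$, columns from $[M]$) and confirm that the principal submatrix of $\Phi\Phi^*$ really is $I_k$. I would favor this route for a self-contained write-up, as it appeals only to $\Phi\Phi^*=I$ and avoids citing the identification of principal-minor sums with elementary symmetric functions; the eigenvalue argument is more conceptual but leans on that external identity together with the projection structure of $G$.
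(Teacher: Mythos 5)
Your proposal is correct, and it contains two distinct arguments. The first is essentially the paper's own proof verbatim: the paper also identifies $\sum_{|K|=k}v_k^2(\Phi_K)$ as the sum of all principal $k\times k$ minors of $\Phi^*\Phi$, hence $(-1)^k$ times the coefficient of $\lambda^{M-k}$ in its characteristic polynomial, which for a Parseval frame is $\lambda^{M-N}(\lambda-1)^N$, yielding $\binom{N}{k}$; reading that coefficient as $e_k$ evaluated on the spectrum $(1,\dots,1,0,\dots,0)$ is the same computation phrased through elementary symmetric polynomials. Your second, double Cauchy--Binet argument is a genuinely different route that the paper does not give for this proposition (it instead points to a geometric proof in \cite{MB96}), and it is sound: $v_k^2(\Phi_K)=\sum_{|S|=k}|\det\Phi[S,K]|^2$ with $S\subseteq[N]$, and after swapping sums the inner sum collapses by a second Cauchy--Binet to $\det\bigl(\Phi[S,\cdot]\,\Phi[S,\cdot]^*\bigr)$, the determinant of a principal $k\times k$ submatrix of $\Phi\Phi^*=I_N$, which is $\det I_k=1$, leaving $\binom{N}{k}$ terms. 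What this buys is self-containedness: it uses only the defining relation $\Phi\Phi^*=I$ and one standard formula, avoiding both the cited principal-minors/characteristic-polynomial identity and the projection structure of the Gram matrix; it also meshes with the paper, which deploys exactly this Cauchy--Binet mechanism later in Section \ref{ens} when bounding $\sum_{|K|=k}cv_k^2(\Phi_K)$ for equal norm frames. One small bookkeeping remark: your Cauchy--Binet route degenerates correctly when $k>N$, since then no $S\subseteq[N]$ with $|S|=k$ exists, both sides vanish, and $\binom{N}{k}=0$.
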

\begin{proof}
Note that $\sum_{|K|=k}v_k^2(\Phi_K)$ is the sum of all principal $k\times k$ minors of $\Phi^*\Phi$ so $(-1)^k\sum_{|K|=k}v_k^2(\Phi_K)$ is the coefficient of $\lambda^{M-k}$ in the characteristic polynomial of $\Phi^*\Phi$ (see section 7.1 of \cite{LAbook}). But since $\Phi$ is a Parseval frame the characteristic polynomial of $\Phi^*\Phi$ is $\lambda^{M-N}(\lambda-1)^N$ so the coefficient of $\lambda^{M-k}$ is $(-1)^k{N\choose k}$.

See Theorem 1 in \cite{MB96} for a different proof of a more geometric nature.
\end{proof}
From this we see that if all of the k-dimensional parallelotopes determined by $\Phi$ have the same volume $c_{M,N,k}$ then
$$
c_{M,N,k}=\sqrt{{M\choose k}^{-1}{N\choose k}}=\sqrt{\frac{N!(M-k)!}{M!(N-k)!}}.
$$
The case $k=1$ tells us the familiar fact that if $\Phi$ is equal norm then $\|\varphi_i\|=\sqrt{N/M}$ for every $i$. 

\begin{proposition}\label{volbound}
Let $\Phi\in\mathcal{P}(M,N)$ and $k\leq N$, then
$$
{N\choose k}\leq V_k(\Phi)\leq \sqrt{{M\choose k}{N \choose k}}
$$
with equality in the upper bound if and only if $v_k(\Phi_K)=c_{M,N,k}$ for every $K$ with $|K|=k$, and equality in the lower bound if and only if there is a subset $J\subseteq [M]$ with $|J|=N$ such that $\{\varphi_i\}_{i\in I}$ is an orthonormal basis and $\varphi_i=0$ whenever $i\not\in J$.
\end{proposition}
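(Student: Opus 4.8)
The plan is to derive both inequalities from Proposition \ref{prodcos}, which gives $\sum_{|K|=k} v_k^2(\Phi_K) = \binom{N}{k}$, together with the elementary bound $0 \le v_k(\Phi_K) \le 1$. To establish $v_k(\Phi_K) \le 1$, I would observe that $v_k^2(\Phi_K) = \det(\Phi_K^*\Phi_K)$ is a principal $k\times k$ minor of the Gram matrix $P := \Phi^*\Phi$, which is an orthogonal projection; hence the principal submatrix $P_K$ satisfies $0 \preceq P_K \preceq I_k$, all its eigenvalues lie in $[0,1]$, and their product $\det(P_K)$ is at most $1$.

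For the upper bound I would apply Cauchy--Schwarz to the $\binom{M}{k}$-tuple $\bigl(v_k(\Phi_K)\bigr)_{|K|=k}$ paired against the all-ones tuple, giving $V_k(\Phi) = \sum_{|K|=k} v_k(\Phi_K) \le \binom{M}{k}^{1/2}\bigl(\sum_{|K|=k} v_k^2(\Phi_K)\bigr)^{1/2} = \sqrt{\binom{M}{k}\binom{N}{k}}$. Equality in Cauchy--Schwarz forces all $v_k(\Phi_K)$ to be equal, and Proposition \ref{prodcos} then pins the common value to $c_{M,N,k}$. This direction is routine.

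For the lower bound I would use $v_k(\Phi_K) \le 1$ to conclude $v_k^2(\Phi_K) \le v_k(\Phi_K)$ termwise, so $\binom{N}{k} = \sum_{|K|=k} v_k^2(\Phi_K) \le \sum_{|K|=k} v_k(\Phi_K) = V_k(\Phi)$, with equality precisely when $v_k(\Phi_K) \in \{0,1\}$ for every $K$. The hard part is characterizing this equality case and extracting the claimed structure. First I would note that $v_k(\Phi_K)=1$ forces every eigenvalue of $P_K$ to equal $1$, i.e. $P_K = I_k$, i.e. $\{\varphi_i\}_{i\in K}$ is orthonormal; and that $\sum_{|K|=k} v_k^2 = \binom{N}{k} \ge 1$ guarantees at least one such orthonormal $k$-set $K_0$. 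The crux is then a swapping argument: for $j \notin K_0$, if every set obtained from $K_0$ by replacing one index with $j$ has volume $0$, then $\varphi_j$ lies in each of the $k$ distinct $(k-1)$-dimensional spans of $\{\varphi_i\}_{i \in K_0 \setminus\{i_l\}}$, whose common intersection inside the orthonormal system $\{\varphi_i\}_{i\in K_0}$ is $\{0\}$, forcing $\varphi_j = 0$; otherwise some swap has volume $1$, making $\varphi_j$ a unit vector. Thus every $\varphi_j$ is either $0$ or a unit vector, and since $\Phi$ is Parseval with $\sum_j \|\varphi_j\|^2 = N$, exactly $N$ of them, indexed by some $J$, are unit vectors.

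To finish the equality characterization I would invoke the projection identity $P_{jj} - P_{jj}^2 = \sum_{l\neq j}|P_{jl}|^2$, valid because $P = P^2 = P^*$; it shows that $P_{jj}=1$ forces $\langle \varphi_j,\varphi_l\rangle = 0$ for all $l\neq j$. Hence $\{\varphi_j\}_{j\in J}$ is orthonormal, and being $N$ orthonormal vectors in $\mathbb{F}^N$ it is an orthonormal basis, with $\varphi_i = 0$ for $i \notin J$. The converse is immediate: for such a frame the only nonzero volumes $v_k(\Phi_K)$ are the $\binom{N}{k}$ orthonormal $k$-subsets of $J$, each equal to $1$, so $V_k(\Phi) = \binom{N}{k}$. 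I expect the swapping argument to be the main obstacle, since it is the one step where the fixed value of $k$ and the geometry of the orthonormal system $K_0$ must be used to rule out intermediate configurations.
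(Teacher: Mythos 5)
Your proof is correct, and while it shares the paper's overall skeleton---both bounds are extracted from Proposition \ref{prodcos} together with $0\le v_k(\Phi_K)\le 1$, with Cauchy--Schwarz giving the upper bound and its equality case---your treatment of the lower-bound equality case is genuinely different from, and more self-contained than, the paper's. The paper bounds $\sum_{|K|=k}v_k^2(\Phi_K)\le \max_K\{v_k(\Phi_K)\}\cdot V_k(\Phi)\le V_k(\Phi)$ and then argues by contrapositive: any Parseval frame other than an orthonormal basis plus zeros contains a vector with $0<\|\varphi_i\|<1$, which lies in some linearly independent $k$-set $K$ (extendable since $\Phi$ spans and $k\le N$), whence $0<v_k(\Phi_K)<1$ by Hadamard's inequality, breaking one of the two inequalities. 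Notably, the paper simply asserts the ``intermediate norm'' fact, which is exactly the statement that a Parseval frame whose vectors all have norm $0$ or $1$ is an orthonormal basis plus zeros---the fact you actually prove via the projection identity $P_{jj}-P_{jj}^2=\sum_{l\neq j}|P_{jl}|^2$. Your forward route---termwise $v_k^2\le v_k$ forcing $v_k(\Phi_K)\in\{0,1\}$, existence of an orthonormal $K_0$ since $\binom{N}{k}\ge 1$, and the swap argument showing each $\varphi_j$ with $j\notin K_0$ is either a unit vector or lies in $\bigcap_l \mathrm{span}\{\varphi_i\}_{i\in K_0\setminus\{i_l\}}=\{0\}$---extracts the structure constructively rather than by exhibiting a contradiction, at the cost of the extra combinatorial swap step; what it buys is that every ingredient is proved rather than cited implicitly, and it avoids needing to extend an arbitrary nonzero vector to an independent $k$-set. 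Both equality characterizations are sound, and your verse of the converse direction matches the paper's.
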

\begin{proof}
The upper bound follows directly from Proposition \ref{prodcos}. For the lower bound we have
\begin{eqnarray*}
{N\choose k}&=&\sum_{|K|=k}v_k^2(\Phi_K) \\
&\leq& \max_{|K|=k}\{v_k(\Phi_K)\}V_k(\Phi)\leq V_k(\Phi)
\end{eqnarray*}
were the second inequality follows from the fact that $\|\varphi_i\|\leq 1$ for every $i$ since $\Phi$ is Parseval. To achieve equality in the first inequality we must have $v_k(\Phi_K)=\max\{v_k(\Phi_K)\}$ whenever $v_k(\Phi_K)>0$, and to achieve equality in the second inequality we must have $\max\{v_k(\Phi_K)\}=1$. Since $\|\varphi_i\|\leq 1$ it follows that $v_k(\Phi_K)=1$ if and only if $\Phi_K$ is orthonormal. It now follows that a Parseval frame consisting of an orthonormal basis and zeros will satisfy both inequalities with equality. 

If $\Phi$ is any other Parseval frame then it contains a vector $\varphi_i$ with $0<\|\varphi_i\|<1$. If $K$ is any set with $|K|=k$, $i\in K$ and $\Phi_K$ linearly independent (which must exist since $\Phi$ is a frame) then $0<v_k(\Phi_K)< 1$. If $\Phi$ has the property that $v_k(\Phi_K)=\max\{v_k(\Phi_K)\}$ whenever $v_k(\Phi_K)>0$ then $\max\{v_k(\Phi_K)\}<1$ and the second inequality is strict, otherwise the first inequality is strict.
\end{proof}

For a given Parseval frame $\Phi$ the variance in the k-dimensional volumes determined by $\Phi$ is
\begin{eqnarray}
Var_k(\Phi)&=&\sum_{|K|=k}(v_k(\Phi_K)-{M\choose k}^{-1}V_k(\Phi))^2 \label{volvar}\\
&=&{N\choose k}-{M\choose k}^{-1}V_k^2(\Phi), \notag
\end{eqnarray}
so $Var_k(\Phi)$ is minimized precisely when $V_k(\Phi)$ is maximized. From this perspective it is easy to see that if $v_k(\Phi)=c_{M,N,k}$ for every $K$ then $\Phi$ is a solution to \eqref{volume}. This establishes that the solutions to \eqref{volume} when $k=1$ are precisely the equal norm Parseval frames. It also shows that when equiangular Parseval frames exist then they are solutions to \eqref{volume} for $k=2$, however it does not show that there are no other solutions in this case.

It is known that if a Parseval frame $\Phi=\{\varphi_i\}_{i=1}^M$ satisfies
\begin{equation}\label{angle}
|\langle \frac{\varphi_i}{\|\varphi_i\|},\frac{\varphi_j}{\|\varphi_j\|}\rangle|=c
\end{equation}
for every $i\neq j$ then in fact $\Phi$ must be equal norm (see \cite{BPT09}). This means that if we define the angle between vectors as in \eqref{angle} (which is the correct way) then equiangular Parseval frames are automatically equal norm, so we do not need to include this in the definition. We now prove a similar result for the case of $k$-dimensional volumes.

\begin{theorem}\label{equalvol}
Let $\{\varphi_i\}_{i=1}^M$ be a Parseval frame for $\mathbb{F}^N$ and assume there
is a $0<k\leq N$ so that the parallelotopes spanned by any $k$-elment subset of the
frame have the same volume.  Then the $k-1$-dimensional parallelotopes also have the same volume.
\end{theorem}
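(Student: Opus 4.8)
The plan is to pass everything to the Gram matrix $G=\Phi^*\Phi$, which is an orthogonal projection of rank $N$, and to recall that $v_k^2(\Phi_K)=\det(\Phi_K^*\Phi_K)$ is exactly the $k\times k$ principal minor $\det(G_K)$ of $G$ indexed by $K$. Thus the hypothesis says that every $k\times k$ principal minor of $G$ equals a common value $c^2$, and the goal is to show that every $(k-1)\times(k-1)$ principal minor $\det(G_J)$ is independent of $J$, since $v_{k-1}(\Phi_J)=\sqrt{\det(G_J)}$.

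First I would fix a set $J$ with $|J|=k-1$ and relate the $k$-minors sitting directly above it to $\det(G_J)$. For each $i\notin J$ I write the bordered matrix $G_{J\cup\{i\}}$ in block form with corner $g_{ii}=\|\varphi_i\|^2$, border column $b_i=(G_{ji})_{j\in J}$, and core block $G_J$. The bordered-determinant (cofactor) identity
\[
\det(G_{J\cup\{i\}})=g_{ii}\det(G_J)-b_i^*\,\mathrm{adj}(G_J)\,b_i
\]
holds with no invertibility assumption on $G_J$, which matters because some $(k-1)$-volumes could vanish a priori. Summing over the $M-k+1$ indices $i\notin J$ and invoking the hypothesis puts $(M-k+1)c^2$ on the left-hand side.

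The crux is evaluating the quadratic sum $\sum_i b_i^*\,\mathrm{adj}(G_J)\,b_i$, and this is exactly where the projection property $G^2=G$ enters. Summing $b_i^*A b_i$ over \emph{all} $i\in[M]$ (with $A=\mathrm{adj}(G_J)$) and interchanging the order of summation turns the inner sum $\sum_i \overline{G_{pi}}G_{qi}$ into $(G^2)_{qp}=G_{qp}$, so the whole expression collapses to $\mathrm{trace}(A\,G_J)=\mathrm{trace}(\det(G_J)I)=(k-1)\det(G_J)$. The terms with $i\in J$ are then peeled off separately: there $b_i$ is a column of $G_J$, so $\mathrm{adj}(G_J)b_i=\det(G_J)e_i$ and hence $b_i^*\mathrm{adj}(G_J)b_i=g_{ii}\det(G_J)$. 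Subtracting, the restricted sum over $i\notin J$ equals $\det(G_J)\big[(k-1)-\sum_{i\in J}g_{ii}\big]$.

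Finally I would assemble the pieces. The diagonal terms combine cleanly because $\sum_{i\notin J}g_{ii}=N-\sum_{i\in J}g_{ii}$ from $\mathrm{trace}(G)=N$, and the $\sum_{i\in J}g_{ii}$ contributions cancel, leaving $(M-k+1)c^2=(N-k+1)\det(G_J)$. Since $k\le N$ forces $N-k+1\ge 1$, this yields
\[
\det(G_J)=\frac{M-k+1}{N-k+1}\,c^2,
\]
a quantity depending only on $M,N,k$ and not on $J$, which is the claim. The only genuine obstacle is the quadratic sum, and the key realization is that its evaluation rests entirely on $G^2=G$; everything else is bookkeeping. As a consistency check, feeding $c^2=\binom{N}{k}/\binom{M}{k}$ from Proposition \ref{prodcos} into this formula recovers $\det(G_J)=\binom{N}{k-1}/\binom{M}{k-1}=c_{M,N,k-1}^2$.
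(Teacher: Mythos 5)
Your proof is correct, and it takes a genuinely different route from the paper. The paper argues geometrically: fixing a $(k-1)$-set $J$ and letting $P$ be the orthogonal projection onto $\mathrm{span}\{\varphi_i\}_{i\in J}$, it uses the base-times-height factorization $v_k(\Phi_{J\cup\{m\}})=v_{k-1}(\Phi_J)\,\|(I-P)\varphi_m\|$ to conclude all heights share a common value $d$, then pins down $d$ via the Parseval trace identity $\sum_{j\notin J}\|(I-P)\varphi_j\|^2=\mathrm{trace}(I-P)$. You instead stay entirely inside the Gram matrix, replacing base-times-height with the bordered-determinant identity $\det(G_{J\cup\{i\}})=g_{ii}\det(G_J)-b_i^*\,\mathrm{adj}(G_J)\,b_i$ and Parsevality with $G^2=G$ in the trace computation; both proofs ultimately average the bordered $k$-minors over the $M-k+1$ indices $i\notin J$. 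The paper's route buys a stronger pointwise intermediate fact (every frame vector is equidistant from every $(k-1)$-dimensional coordinate span, which is geometric content in its own right), while yours buys robustness against degeneracy: the paper's inference $\|(I-P)\varphi_m\|=\|(I-P)\varphi_n\|$ silently divides by $v_{k-1}(\Phi_J)$, and its trace count presumes $\mathrm{rank}\,P=k-1$, i.e., that every $(k-1)$-subset is linearly independent --- true, but only after invoking Proposition \ref{prodcos} to see the common $k$-volume $c_{M,N,k}$ is positive, a point the paper glosses over. Your adjugate identities hold verbatim for singular $G_J$, so no such preliminary reduction is needed. Your final constant $\det(G_J)=\frac{M-k+1}{N-k+1}\,c^2=c_{M,N,k-1}^2$ is correct and in fact quietly repairs a small off-by-one in the paper's displayed intermediate step (its $N-k=(M-k)d^2$ should read $N-k+1=(M-k+1)d^2$, since $|J^c|=M-k+1$ and $\mathrm{trace}(I-P)=N-k+1$; the paper's final formula is nevertheless the right one). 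The only cosmetic caveat is that the adjugate makes sense only for $k\geq 2$, but for $k=1$ the conclusion about $0$-dimensional volumes is vacuous, so nothing is lost.
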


\begin{proof}
Choose any $k-1$ element subset of the Parseval frame say $\{\varphi_i\}_{i\in J}$.
For each $m,n\notin J$, 
\[ v_k(\{\varphi_i\}_{i\in J} \cup \{\varphi_m\}) = v_k(\{\varphi_i\}_{i\in J}\cup \{\varphi_n\})=c_{M,N,k}.\]
Letting $P$ be the orthogonal projection of the space onto span $\{\varphi_i\}_{i\in J}$,
the above says that
\[ \|(I-P)\varphi_m\|= \|(I-P)\varphi_n\|:= d.\]
Since this is a Parseval frame,
\[ N-k = \sum_{j\notin J}\|(I-P)\varphi_j\|^2 = (M-k)d^2.\]
So
\[ \|(I-P)\varphi_j\|^2= d^2=\frac{N-k}{M-k}.\]
Finally, we have
\[ c_{M,N,k}=v_k(\{\varphi_i\}_{i\in J}\cup \{\varphi_j\}) = d\cdot v_{k-1}(\{\varphi_i\}_{i\in J}).\]
That is,
\[v_{k-1}(\Phi_J)= v_{k-1}(\{\varphi_i\}_{i\in J}) = \frac{c_{M,N,k}}{d}=c_{M,N,k-1}.\]
\end{proof}

\begin{corollary}\label{C1}
Let $\Phi$ be a Parseval frame for $\mathbb{F}^N$ and assume there
is a $2\leq k\leq N$ so that the parallelotopes spanned by any $k$-elment subset of the
frame have the same volume.  Then $\Phi$ is equiangular.
\end{corollary}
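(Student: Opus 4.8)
The plan is to obtain the conclusion by iterating Theorem \ref{equalvol} downward. By hypothesis all $k$-dimensional parallelotopes spanned by $k$-element subsets of $\Phi$ share a common volume. Theorem \ref{equalvol} then tells us that all $(k-1)$-dimensional parallelotopes also share a common volume. Since the common-volume hypothesis of Theorem \ref{equalvol} is now satisfied at level $k-1$, and since $0 < k-1$ still holds (as $k \geq 2$), I can reapply the theorem to conclude that all $(k-2)$-dimensional parallelotopes share a common volume, and so on. Carrying out this finite descent from level $k$ down to level $1$, I conclude that for every $j$ with $1 \leq j \leq k$, all $j$-element subsets of $\Phi$ span parallelotopes of the same volume $c_{M,N,j}$. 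The only point to verify is that the range condition $0 < j \leq N$ of Theorem \ref{equalvol} is preserved at each stage, which is immediate since we only ever decrease $j$ and we start with $k \leq N$.

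Two consequences of this descent do the work. First, the $j=1$ case says that $v_1(\{\varphi_i\}) = \|\varphi_i\|$ is constant in $i$, so $\Phi$ is equal norm and hence $\|\varphi_i\|^2 = N/M$ for every $i$. Second, the $j=2$ case says that all $2$-dimensional volumes equal the common value $c_{M,N,2}$. I then invoke the explicit formula for the $2\times 2$ Gram determinant: for $i \neq j$,
\[
v_2(\{\varphi_i,\varphi_j\})^2 = \det\begin{pmatrix} \|\varphi_i\|^2 & \langle\varphi_i,\varphi_j\rangle \\ \langle\varphi_j,\varphi_i\rangle & \|\varphi_j\|^2 \end{pmatrix} = \|\varphi_i\|^2\|\varphi_j\|^2 - |\langle\varphi_i,\varphi_j\rangle|^2.
\]
Substituting $\|\varphi_i\|^2 = \|\varphi_j\|^2 = N/M$ and $v_2(\{\varphi_i,\varphi_j\})^2 = c_{M,N,2}^2$ and solving yields $|\langle\varphi_i,\varphi_j\rangle|^2 = (N/M)^2 - c_{M,N,2}^2$, which is a constant independent of the pair $(i,j)$. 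Hence $|\langle\varphi_i,\varphi_j\rangle|$ takes a common value for all $i \neq j$, and combined with the equal-norm property this is exactly the statement that $\Phi$ is equiangular.

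I do not expect a genuine obstacle here: the substantive analytic content lives entirely in Theorem \ref{equalvol}, and the corollary is essentially the bookkeeping of the induction together with one $2\times 2$ determinant identity. The single subtlety worth flagging is that equality of the $2$-dimensional volumes alone does not force equiangularity --- it only controls the combination $\|\varphi_i\|^2\|\varphi_j\|^2 - |\langle\varphi_i,\varphi_j\rangle|^2$ --- so it is essential to descend all the way to level $1$ to pin down the norms before extracting the inner products. This is precisely why the corollary requires the hypothesis $k \geq 2$ rather than $k \geq 1$: we need at least the $2$-dimensional layer to speak about pairwise inner products, and the descent supplies the equal-norm input that closes the determinant computation.
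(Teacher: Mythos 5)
Your proof is correct and takes essentially the same route as the paper's: iterate Theorem \ref{equalvol} down to level $1$ to conclude $\Phi$ is equal norm, then use the common $2$-dimensional volume to force $|\langle\varphi_i,\varphi_j\rangle|$ to be constant. Your explicit $2\times 2$ Gram determinant computation simply fills in the step the paper leaves implicit in the phrase ``since $\Phi$ is equal norm this implies $\Phi$ is equiangular.''
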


\begin{proof}
By iterating this result down to $k=1$, we discover that the 1-dimensional
volumes are all equal which means $\Phi$ is equal norm. For $k=2$ we see that the parallelogram determined by any pair of vectors in $\Phi$ have the same area, but since $\Phi$ is equal norm this implies $\Phi$ is equiangular.
\end{proof}

This also shows the following which we state as a corollary for later reference:

\begin{corollary}\label{eav}
The solutions to \eqref{volume} for $k=2$ are precisely the equiangular Parseval frames when they exist.
\end{corollary}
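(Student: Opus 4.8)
The plan is to read off the corollary from the upper bound of Proposition \ref{volbound} together with its equality clause and Corollary \ref{C1}; the decisive point is that the phrase ``when they exist'' upgrades the inequality $V_2(\Phi)\le\sqrt{{M\choose 2}{N\choose 2}}$ to an \emph{attained} maximum, which is exactly what lets the equality characterization be applied to an arbitrary optimizer.

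First I would check that an equiangular Parseval frame $\Phi$ actually realizes the upper bound for $k=2$. For a pair $K=\{i,j\}$ the $2\times 2$ Gram determinant gives
$$v_2^2(\Phi_K)=\|\varphi_i\|^2\|\varphi_j\|^2-|\langle\varphi_i,\varphi_j\rangle|^2=\frac{N^2}{M^2}-\frac{N(M-N)}{M^2(M-1)}=\frac{N(N-1)}{M(M-1)}=c_{M,N,2}^2,$$
using $\|\varphi_i\|^2=N/M$ and the Welch value \eqref{welch}. Hence every pair has the common volume $c_{M,N,2}$, so by the equality clause of Proposition \ref{volbound} the frame $\Phi$ attains the bound and solves \eqref{volume} for $k=2$. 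In particular, once equiangular Parseval frames are assumed to exist, the maximal value of $V_2$ over $\mathcal{P}(M,N)$ is exactly $\sqrt{{M\choose 2}{N\choose 2}}$.

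For the reverse inclusion let $\Psi$ be any solution to \eqref{volume} for $k=2$. Because some equiangular frame attains the bound, $\Psi$ must satisfy $V_2(\Psi)=\sqrt{{M\choose 2}{N\choose 2}}$ as well, and the equality clause of Proposition \ref{volbound} then forces $v_2(\Psi_K)=c_{M,N,2}$ for every two-element $K$; that is, all parallelograms spanned by pairs of frame vectors have the same area. Applying Corollary \ref{C1} with $k=2$ yields that $\Psi$ is equiangular, and combined with the forward direction this identifies the solutions precisely as the equiangular Parseval frames. I do not anticipate a substantive obstacle, since every ingredient is already in hand; the only step that needs care is the logical use of the existence hypothesis, without which the supremum of $V_2$ could fall strictly below $\sqrt{{M\choose 2}{N\choose 2}}$ and the equality characterization would no longer apply to optimizers.
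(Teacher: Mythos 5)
Your proposal is correct and follows exactly the route the paper intends: the forward direction is the observation (already made after \eqref{volvar} and via the equality clause of Proposition \ref{volbound}) that equiangular Parseval frames have all pairwise volumes equal to $c_{M,N,2}$ and hence attain the maximum, and the converse is the equality clause plus Corollary \ref{C1}, with the existence hypothesis used precisely as you note to guarantee the bound is attained. The paper states the corollary without writing out these steps, and your write-up supplies them faithfully, including the correct computation $v_2^2(\Phi_{\{i,j\}})=\frac{N^2}{M^2}-c_{M,N}^2=c_{M,N,2}^2$.
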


Since we know that the properties of being equal norm and equiangular are preserved under Naimark complementation it is natural to ask whether this is still true for $k$-dimensional volumes for $k>2$.

\begin{proposition}\label{naimarkvol}
Let $\Phi$ be a Parseval frame and suppose $\Psi$ is a Naimark complement to $\Phi$ and suppose $k\leq\min\{N,M-N\}$. If $v_k(\Phi_K)=c_{M,N,k}$ for every $K$ then $v_k(\Psi_K)=c_{M,M-N,k}$ for every $K$.
\end{proposition}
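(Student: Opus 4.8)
The plan is to reduce everything to principal minors of the Gram matrix $G=\Phi^*\Phi$ and its complement $H=\Psi^*\Psi=I-G$. Writing $G[K]$ for the $k\times k$ principal submatrix of $G$ with rows and columns indexed by $K$, we have $v_k(\Phi_K)^2=\det(G[K])$ and $v_k(\Psi_K)^2=\det(H[K])$. So the proposition is really a purely linear-algebraic statement: comparing the equal-order principal minors of a rank-$N$ orthogonal projection $G$ with those of its complementary rank-$(M-N)$ projection $H$. The constraint $k\le\min\{N,M-N\}$ guarantees both that the hypothesis on $\Phi$ makes sense and that $v_k(\Psi_K)$ and $c_{M,M-N,k}$ are defined.

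The first move is to upgrade the hypothesis. As stated it only asserts that the $k\times k$ principal minors of $G$ all equal $c_{M,N,k}^2$, but iterating Theorem \ref{equalvol} downward from $k$ to $1$ shows that for every $0\le j\le k$ all $j\times j$ principal minors of $G$ are equal, namely $\det(G[L])=c_{M,N,j}^2$ whenever $|L|=j$ (with the convention $c_{M,N,0}^2=1$ for the empty minor). This strengthening is the crucial point: I will need equality of all lower-order minors, not merely those of order $k$.

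Next I would expand. Since $H[K]=I_k-G[K]$, the standard expansion of a determinant of the form $\det(I-A)$ as an alternating sum of principal minors of $A$ gives
\[
\det(H[K])=\det\bigl(I_k-G[K]\bigr)=\sum_{L\subseteq K}(-1)^{|L|}\det(G[L])=\sum_{j=0}^{k}(-1)^j\binom{k}{j}c_{M,N,j}^2,
\]
where the last equality uses the upgraded hypothesis together with the fact that $K$ has exactly $\binom{k}{j}$ subsets of size $j$. The right-hand side depends only on $k$ (and on $M,N$) and not on the particular set $K$, so all $k\times k$ principal minors of $H$ coincide; equivalently $v_k(\Psi_K)$ is independent of $K$.

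Finally I would pin down the common value using the trace-type identity already available. Applying Proposition \ref{prodcos} to the Parseval frame $\Psi\in\mathcal{P}(M,M-N)$ gives $\sum_{|K|=k}v_k(\Psi_K)^2=\binom{M-N}{k}$, and since the $\binom{M}{k}$ summands are all equal, each must equal $\binom{M}{k}^{-1}\binom{M-N}{k}=c_{M,M-N,k}^2$, which is exactly the claim. I expect the only genuine obstacle to be recognizing that Theorem \ref{equalvol} has to be invoked first, to control \emph{all} the lower-order minors of $G$; once that is in hand the inclusion–exclusion expansion of $\det(I-A)$ and Proposition \ref{prodcos} finish the argument, and one does not even need to evaluate the alternating sum explicitly.
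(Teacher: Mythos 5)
Your proof is correct and takes essentially the same route as the paper: the paper likewise expands $\det(\Psi_K^*\Psi_K)=\det(I-\Phi_K^*\Phi_K)$ via the identity \eqref{detid} as an alternating sum of principal minors of $\Phi_K^*\Phi_K$, and invokes Theorem \ref{equalvol} exactly as you do to know that every $|J|\times|J|$ principal minor equals $c_{M,N,|J|}^2$, making the sum independent of $K$. Your final step pinning down the common value via Proposition \ref{prodcos} is left implicit in the paper's proof, but it is the same argument, not a genuinely different one.
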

\begin{proof}
This follows from the identity
\begin{equation}\label{detid}
\det(I+A)=1+\sum \det(A_J)
\end{equation}
where the sum is taken over all principal submatrices of $A$ (including $A$ itself). For $K\subseteq [M]$ we have that $\Psi_K^*\Psi_K=I-\Phi_K^*\Phi_K$, so we can apply \eqref{detid} with $A=-\Phi_K^*\Phi_K$. The determinants of the principal submatrix of $\Phi_K^*\Phi_K$ indexed by $J\subseteq K$ corresponds to the  $|J|$-dimensional volume of the parallelotope spanned by $\{\varphi_i\}_{i\in J}$ which is equal to $c_{M,N,|J|}^2$ by Theorem \ref{equalvol}. 
\end{proof}

There is a subtlety in Proposition \ref{naimarkvol} that is worth addressing. In order to interpret $v_k(\Phi_K)$ as a $k$-dimensional volume we need to require $k\leq N$, however this does not automatically imply that $k\leq M-N$. Nonetheless, the identity \eqref{detid} is still true regardless of the relationship of $k$ with $N$ or $M-N$. If there is a Parseval frame $\Phi\in\mathcal{P}(M,N)$ that has equal $k$-dimensional volumes for some $k>M-N$ then by Theorem \ref{equalvol} it also has equal $M-N$-dimensional volumes. If $\Psi\in\mathcal{P}(M,M-N)$ is a Naimark complement then Proposition \ref{naimarkvol} says that $\Psi$ has equal $M-N$-dimensional volumes and therefore has equal $k$-dimensional volumes for every $k\leq M-N$ again by Theorem \ref{equalvol}. Then if $|K|>M-N$ we have that $\det(\Psi_K^*\Psi_K)=0$ so by \eqref{detid} $\Phi$ has equal $k$-dimensional volumes for every $k\leq N$. 

While this proposition looks interesting we suspect that there are not very many examples of Parseval frames with equal volumes for $k>2$. If $\Phi$ is an equiangular Parseval frame then
$$
\Phi_{\{i,j\}}^*\Phi_{\{i,j\}}=
\begin{bmatrix}
\frac{N}{M} & \langle\varphi_i,\varphi_j\rangle \\
\langle\varphi_j,\varphi_i\rangle & \frac{N}{M}
\end{bmatrix}
$$
and since $|\langle\varphi_i,\varphi_j\rangle|=c_{M,N}$ not only is the determinant of this matrix the same for every set $\{i,j\}$, the individual eigenvalues are the same as well. It is known that over $\mathbb{R}$ it is not possible for every subset of size $k$ of a Parseval frame to have the same singular values if $k>2$ except in the trivial cases $M=N$ and $M=N+1$ (see \cite{BP05}), so while this does not rule out the possibility of having equal volumes it does suggest that if there were any such examples then the situation would be much more complicated. However in \cite{HS12} the authors identify a family of complex equiangular Parseval frames, which they refer to as 3-uniform, that do, in fact, have equal $3$-dimensional volumes (by Theorem \ref{equalvol} we can now say that these are the only Parseval frames with equal $3$-dimensional volumes), and they also suggest that there are very few nontrivial frames with equal $4$-dimensional volumes.

Even if there do exist some examples of Parseval frames with equal $k$-volumes we know that for most triples $(M,N,k)$ such frames will not exist since for most pairs $(M,N)$ equiangular Parseval frames do not exist. Therefore it is natural to ask whether Naimark complements to solutions to \eqref{volume} must also be solutions. This is not so clear, but we can say that they must be solutions to a related problem. Before presenting this statement we need to take a closer look at singular values of subsets of Parseval frames and their Naimark complements.

\begin{proposition}\label{svn}
Suppose $\Phi\in \mathcal{P}(M,N)$ and $\Psi\in \mathcal{P}(M,M-N)$ are Naimark complements and let $K\subseteq [M]$ with $|K|=k\leq N$. Then
$$
\sigma_i(\Psi_{K^c})=
\begin{cases}
1 \text{ if } 1\leq i\leq M-N-k\\
\sigma_j(\Phi_K) \text{ if } i=M-N-k+j 
\end{cases}
$$
for $i=1,...,M-N$. If $k>M-N$ then this formula is still true but in this case $\sigma_i(\Phi_K)=1$ for $i=1,...,k-(M-N)$.

\end{proposition}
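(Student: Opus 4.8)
The plan is to package $\Phi$ and $\Psi$ into a single unitary matrix and then read off the relationship between $\Phi_K$ and $\Psi_{K^c}$ from the orthonormality of its rows and columns. Concretely, I would form the $M\times M$ matrix $U=\begin{bmatrix}\Phi\\\Psi\end{bmatrix}$ obtained by stacking the $N$ rows of $\Phi$ on top of the $M-N$ rows of $\Psi$. Since $U^*U=\Phi^*\Phi+\Psi^*\Psi=\Phi^*\Phi+(I-\Phi^*\Phi)=I_M$, the matrix $U$ is unitary. Partitioning its columns into the groups $K$ and $K^c$, the four blocks of $U$ are exactly $\Phi_K,\Phi_{K^c},\Psi_K,\Psi_{K^c}$; in particular the top-left block is $\Phi_K$ and the bottom-right block is $\Psi_{K^c}$, so the statement becomes the standard relationship between the diagonal blocks of a unitary matrix.

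Next I would run the eigenvalue bookkeeping through three identities. First, restricting $U^*U=I_M$ to the $K\times K$ block gives $\Phi_K^*\Phi_K+\Psi_K^*\Psi_K=I_k$, so the eigenvalues of the $k\times k$ matrix $\Psi_K^*\Psi_K$ are exactly $1-\sigma_j(\Phi_K)^2$ for $j=1,\dots,k$. Second, $\Psi_K\Psi_K^*$ (of size $(M-N)\times(M-N)$) has the same nonzero eigenvalues as $\Psi_K^*\Psi_K$. Third, since $\Psi$ is Parseval, $\Psi\Psi^*=I_{M-N}$, and splitting this sum over columns gives $\Psi_K\Psi_K^*+\Psi_{K^c}\Psi_{K^c}^*=I_{M-N}$, hence $\Psi_{K^c}\Psi_{K^c}^*=I_{M-N}-\Psi_K\Psi_K^*$. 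Chaining these, the eigenvalues of $\Psi_{K^c}\Psi_{K^c}^*$ are $1$ minus the eigenvalues of $\Psi_K\Psi_K^*$, and taking square roots produces the singular values of $\Psi_{K^c}$ in terms of those of $\Phi_K$. I would also record that $\sigma_1(\Phi_K)\le\|\Phi\|=1$ (all singular values of $\Phi$ equal $1$, and dropping columns cannot increase the norm), which guarantees the claimed decreasing order.

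The main obstacle --- and essentially the only place any care is needed --- is matching the multisets of eigenvalues when passing between $\Psi_K^*\Psi_K$ (a $k\times k$ matrix) and $\Psi_K\Psi_K^*$ (an $(M-N)\times(M-N)$ matrix), since these differ in their number of zero eigenvalues, and this bookkeeping is precisely what produces the two cases. When $k\le M-N$, the eigenvalue multiset of $\Psi_K\Psi_K^*$ is $\{1-\sigma_j(\Phi_K)^2\}_{j=1}^k$ together with $M-N-k$ extra zeros, so $\Psi_{K^c}\Psi_{K^c}^*$ has eigenvalues $\{\sigma_j(\Phi_K)^2\}_{j=1}^k$ together with the eigenvalue $1$ of multiplicity $M-N-k$, which are the largest since $\sigma_j(\Phi_K)\le1$; this is the first case of the formula. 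When $k> M-N$, the matrix $\Psi_K$ has rank at most $M-N<k$, so $\Psi_K^*\Psi_K$ has at least $k-(M-N)$ zero eigenvalues, forcing $\sigma_j(\Phi_K)=1$ and hence, by decreasing order, $\sigma_1(\Phi_K)=\cdots=\sigma_{k-(M-N)}(\Phi_K)=1$; these ``extra'' unit singular values of $\Phi_K$ do not appear in the spectrum of $\Psi_{K^c}$, and one checks that the surviving $M-N$ eigenvalues of $\Psi_K\Psi_K^*$ are precisely $\{1-\sigma_j(\Phi_K)^2\}_{j=k-(M-N)+1}^{k}$, which yields $\sigma_i(\Psi_{K^c})=\sigma_{i+k-(M-N)}(\Phi_K)$ and establishes the remark.
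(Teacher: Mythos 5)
Your proof is correct and follows essentially the same route as the paper's: both arguments rest on the identities $\Psi_K^*\Psi_K=I_k-\Phi_K^*\Phi_K$, the equality of the nonzero spectra of $\Psi_K^*\Psi_K$ and $\Psi_K\Psi_K^*$, and $\Psi_K\Psi_K^*+\Psi_{K^c}\Psi_{K^c}^*=I_{M-N}$; your unitary matrix $U=\begin{bmatrix}\Phi\\ \Psi\end{bmatrix}$ is just a convenient packaging of these three facts, and your explicit multiset bookkeeping (tracking how many zeros pad each Gram matrix) is the same computation the paper leaves implicit. The one substantive difference is the case $k>M-N$: the paper invokes the interlacing inequalities, citing external references, to conclude that $\Phi_K\Phi_K^*$ has eigenvalue $1$ with multiplicity $k-(M-N)$, whereas you obtain this self-containedly from $\mathrm{rank}(\Psi_K)\leq M-N$, which forces at least $k-(M-N)$ zero eigenvalues of $\Psi_K^*\Psi_K=I_k-\Phi_K^*\Phi_K$ and hence $\sigma_1(\Phi_K)=\cdots=\sigma_{k-(M-N)}(\Phi_K)=1$; this is a modest improvement, since it keeps the whole proposition elementary and citation-free.
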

\begin{proof}
Since $k\leq N$ the $N\times k$ matrix $\Phi_K$ can have $k$ nonzero singular values which are the square roots of the eigenvalues of the $k\times k$ matrix $\Phi_K^*\Phi_K$. Since $\Psi_{K}^*\Psi_{K}=I-\Phi_{K}^*\Phi_{K}$ it follows that $\sigma_i(\Psi_K)=\sqrt{1-\sigma_{k-i+1}^2(\Phi_{K})}$. Since $M-k\geq M-N$ the $(M-N)\times K$ matrix $\Psi_{K^c}$ can have $M-N$ nonzero singular values which are the square roots of the eigenvalues of $\Psi_{K^c}\Psi_{K^c}^*$. Since the $(M-N)\times(M-N)$ matrix $\Psi_K\Psi_K^*$ has the same nonzero eigenvalues as $\Psi_{K}^*\Psi_{K}$ the result now follows from the fact that $\Psi_K\Psi_K^*+\Psi_{K^c}\Psi_{K^c}^*=I$.

When $k>M-N$ the interlacing inequalities guarantee that $\Phi_K\Phi_K^*$ must have 1 as an eigenvalue with multiplicity $k-(M-N)$, see \cite{CFMPS13} and \cite{FMPS13}.
\end{proof}

Note that when $k>N$ then $M-k<M-N$ so we can apply Proposition \ref{svn} with the roles of $\Phi$ and $\Psi$ reversed. So what this proposition really says is that in all cases the singular values of $\Phi_K$ and the singular values of $\Psi_{K^c}$ are the same except for an appropriate number of 1's to fill in any empty spots. In particular, when $\Phi$ and $\Psi$ are Naimark complements,
$$
\prod\sigma_i(\Phi_K)=\prod\sigma_i(\Psi_{K^c})
$$
for every $K$. A special case of this is that if either of these products is nonzero then so is the other one, which means that the columns of $\Phi_K$ are linearly independent if and only if the columns of $\Psi_K$ span $\mathbb{F}^{M-N}$, see \cite{ACM12}.

For a collection of vectors $\{f_i\}_{i=1}^k\subseteq\mathbb{F}^M$ with $k>N$ we now define the \textit{complementary k-dimensional volume} as
$$
cv_k(\{f_i\}_{i=1}^k)=cv_k(F)=\sqrt{\det(FF^*)}=\prod_{i=1}^N\sigma_i(F).
$$
Note that the only difference between $cv_k(F)$ and $v_k(F)$ is the use of the frame operator $FF^*$ in place of the Gram matrix $F^*F$. We could allow $k>N$ in the definition of $v_k(F)$ (or $k<N$ in the definition of $cv_k(F)$) but then we would always have $v_k(F)=0$ (or $cv_k(F)=0$). In the case $k=N$ we can actually use either definition since $F$ is a square matrix so $\sqrt{\det(F^*F)}=\sqrt{\det(FF^*)}=|\det(F)|$. We will discuss this case in more detail shortly. For now we pose the following problem:

\begin{equation}\label{cvmax}
\max_{\Phi\in \mathcal{P}(M,N)}CV_k(\Phi):=\sum_{|K|=k}cv_k(\Phi_K).
\end{equation}

\begin{proposition}\label{vcv}
Let $k\leq N<M$ and suppose $\Phi\in \mathcal{P}(M,N)$ and $\Psi\in \mathcal{P}(M,M-N)$ are Naimark complements. Then $\Phi$ is a solution to \eqref{volume} for (M,N,k) if and only if $\Psi$ is a solution to \eqref{cvmax} for $(M,M-N,M-k)$.
\end{proposition}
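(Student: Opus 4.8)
The plan is to reduce the global optimization equivalence to a single pointwise identity relating the ordinary $k$-volume of $\Phi_K$ to the complementary $(M-k)$-volume of $\Psi_{K^c}$, then to sum over all index sets, and finally to appeal to the bijection between unitary equivalence classes furnished by Naimark complementation. All the real content lives in Proposition \ref{svn}; the rest is bookkeeping and a compactness-free appeal to the Grassmannian correspondence recorded in the introduction.

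First I would record the pointwise identity. Fix $K\subseteq[M]$ with $|K|=k\leq N$. Since $\Phi_K$ is $N\times k$ with $k\leq N$, it has exactly $k$ singular values and $v_k(\Phi_K)=\prod_{i=1}^k\sigma_i(\Phi_K)$ is the product of all of them. On the complementary side $\Psi_{K^c}$ is an $(M-N)\times(M-k)$ matrix, and because $k\leq N$ we have $M-k\geq M-N$, so $\Psi_{K^c}$ has exactly $M-N$ singular values and $cv_{M-k}(\Psi_{K^c})=\prod_{i=1}^{M-N}\sigma_i(\Psi_{K^c})$ is again the product of all of them. Proposition \ref{svn}, through the displayed identity $\prod\sigma_i(\Phi_K)=\prod\sigma_i(\Psi_{K^c})$ that follows it, then gives
$$
v_k(\Phi_K)=cv_{M-k}(\Psi_{K^c})
$$
for every such $K$. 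The only case needing separate comment is $k=N$, where $M-k=M-N$ and $\Psi_{K^c}$ is square; there $cv_{M-k}$ and $v_{M-k}$ coincide and the identity still holds.

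Next I would sum over all $K$ with $|K|=k$. The map $K\mapsto K^c$ is a bijection between the size-$k$ subsets of $[M]$ and the size-$(M-k)$ subsets, so
$$
V_k(\Phi)=\sum_{|K|=k}v_k(\Phi_K)=\sum_{|K|=k}cv_{M-k}(\Psi_{K^c})=\sum_{|L|=M-k}cv_{M-k}(\Psi_L)=CV_{M-k}(\Psi).
$$
Thus $V_k$ evaluated on $\Phi$ equals $CV_{M-k}$ evaluated on any Naimark complement $\Psi$. To finish, I would upgrade this to the equivalence of optimization problems: both $V_k$ and $CV_{M-k}$ depend only on singular values of submatrices and hence are invariant under unitary equivalence, and the value $V_k(\Phi)=CV_{M-k}(\Psi)$ depends only on the corresponding equivalence classes. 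Since Naimark complementation is a bijection between the unitary equivalence classes of $\mathcal{P}(M,N)$ and those of $\mathcal{P}(M,M-N)$, the identity shows the two objectives have the same supremum and that $\Phi$ attains the maximum in \eqref{volume} for $(M,N,k)$ precisely when $\Psi$ attains the maximum in \eqref{cvmax} for $(M,M-N,M-k)$.

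I do not anticipate a serious obstacle once Proposition \ref{svn} is available. The only point requiring genuine care is matching the index ranges of the two singular value products so that one is exactly $v_k$ and the other is exactly $cv_{M-k}$; this is precisely what forces the hypothesis $k\leq N$ (equivalently $M-k\geq M-N$) and is the reason the \emph{complementary} volume, rather than the ordinary volume, must appear on the $\Psi$ side.
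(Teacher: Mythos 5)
Your proposal is correct and takes essentially the same route as the paper, whose entire proof is the single line ``By Proposition \ref{svn}, $V_k(\Phi)=CV_{M-k}(\Psi)$.'' You have simply made explicit what that line compresses: the pointwise identity $v_k(\Phi_K)=cv_{M-k}(\Psi_{K^c})$ from the singular value matching (including the square case $k=N$), the bijection $K\mapsto K^c$ in the summation, and the correspondence of unitary equivalence classes under Naimark complementation that transfers equality of the functionals to equivalence of the two maximization problems.
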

\begin{proof}
By Proposition \ref{svn} $V_k(\Phi)=CV_{M-k}(\Psi)$.
\end{proof}

We will now briefly discuss the case $k=N$, for a more detailed account see section 4.3 of \cite{csag}. If $\Phi$ and $\Psi$ are Naimark complements the we have $V_N(\Phi)=CV_N(\Phi)=V_{M-N}(\Psi)=CV_{M-N}(\Psi)$, so \eqref{volume} and \eqref{cvmax} the same problem in this case. Recall that $\Phi$ is a Parseval frame if and only if the Gram matrix $\Phi^*\Phi$ is an orthogonal projection onto an $N$-dimensional subspace of $\mathbb{F}^M$, call this subspace $W_{\Phi}$ and observe that the rows of $\Phi$ form an orthonormal basis for $W_{\Phi}$. Conversely, given any subspace $W\in Gr(M,N)$ if we choose any orthonormal basis for $W$ and let $\Phi$ be the $N\times M$ matrix with these rows then the columns of $\Phi$ will be a Parseval frame in $\mathcal{P}(M,N)$.

Given any full rank $N\times M$ matrix $F$ consider the map $Plu(F)=(\det(F_K))_{|K|=N}$ and note that if $G$ is another $N\times M$ matrix with the same row space as $F$ then there is an invertible $N\times N$ matrix $A$ so that $G=AF$ and so $Plu(G)=\det(A)Plu(F)$. It is not too hard to show that the converse of this is also true, i.e., if $Plu(G)=\lambda Plu(F)$ then there is an invertible matrix $A$ with $\det(A)=\lambda$ such that $G=AF$ and so $F$ and $G$ have the same row space. Therefore $Plu$ assigns to each subspace $W\in Gr(M,N)$ a line in $\mathbb{F}^{{M\choose N}}$, or a point in $\mathbb{P}^{{M\choose N}-1}$. This is known as the Pl\"ucker embedding of the Grassmannian and the image of this map is a smooth, irreducible projective variety. The points in the image of the Pl\"ucker embedding can be characterized as satisfying a specific set of quadtratic polynomials known as the Pl\"ucker relations which we denote $Plu(M,N)$. A complete description of $Plu(M,N)$ is not needed here and many thorough references are readily available. What is important in our context is that we can now rewrite \eqref{volume} and equivalently \eqref{cvmax} as
\begin{align*}
\max_{x\in\mathbb{F}^{{M\choose N}}}&\|x\|_1 \\
\text{subject to } &\|x\|_2=1 \\
& Plu(M,N).
\end{align*}

In general $Plu(M,N)$ can contain many equations and this reformulation may not be very useful, however for small values of $M$ and $N$, $Plu(M,N)$ can be simpler than looking at the entries of $\Phi^*\Phi$. We will illustrate this in the case $\mathbb{F}=\mathbb{R}$, $N=2$, and $M=4$. Note that in this case there is no equiangular Parseval frame, so this is not covered by any of our previous results. Also, $Plu(4,2)$ only contains one equation. So our problem now becomes:
\begin{align*}
\text{max } &|x_{12}|+|x_{13}|+|x_{14}|+|x_{23}|+|x_{24}|+|x_{34}| \\
\text{subject to } &x_{12}^2+x_{13}^2+x_{14}^2+x_{23}^2+x_{24}^2+x_{34}^2 \\
& x_{12}x_{34}-x_{13}x_{24}+x_{14}x_{23}=0
\end{align*}
which can be solved by hand using Lagrange multipliers. This yields the optimal Pl\"ucker coordinates $(\frac{\sqrt{2}}{4},\frac{1}{2},\frac{\sqrt{2}}{4},\frac{\sqrt{2}}{4},\frac{1}{2},\frac{\sqrt{2}}{4})$ which correspond to the Parseval frame
$$
\Phi=
\left[\begin{array}{rcrc}
\frac{1}{2} & 0 & -\frac{1}{2} & -\frac{\sqrt{2}}{2} \\
\frac{1}{2} & \frac{\sqrt{2}}{2} & \frac{1}{2} & 0
\end{array}\right]
$$
which is two scaled orthonormal bases offset by 45 degrees. See \cite{csag} for the details of this calculation.

\section{Nuclear energy}\label{ne}

Given an $N\times k$ matrix $F$ the \textit{nucluear norm} of $F$ is
$$
\|F\|_*=\sum_{i=1}^{\min\{N,k\}}\sigma_i(F).
$$
We now pose the following problem for $k\leq M$:
\begin{equation}\label{nuke}
\max_{\Phi\in \mathcal{P}(M,N)}NE_k(\Phi):=\sum_{|K|=k}\|\Phi_K\|_*,
\end{equation}
where we refer to the quantity $NE_k(\Phi)$ as the \textit{k-nucluear energy} of $\Phi$.
For $\Phi\in \mathcal{P}(M,N)$ we have
\begin{eqnarray}
\sum_{|K|=k}\sum_{i=1}^{\min\{N,k\}}\sigma_i^2(\Phi_K)&=&\sum_{|K|=k}\text{trace}(\Phi_K\Phi_K^*) \notag \\
&=&\text{trace}(\sum_{|K|=k}\Phi_K\Phi_K^*)\notag \\
&=&\text{trace}(\sum_{|K|=k}\sum_{i\in K}\varphi_i\varphi_i^*)\notag \\
&=&\text{trace}({M-1\choose k-1}I)\notag \\
&=&N{M-1\choose k-1},\label{nukebound}
\end{eqnarray}
where the second to last line follows from the fact that each $\varphi_i\varphi_i^*$ appears exactly ${M-1\choose k-1}$ times. Therefore if $k\leq N$, $NE_k(\Phi)$ would be maximized if
$$
\sigma_i^2(\Phi_K)=N{M-1\choose k-1}{M\choose k}^{-1}k^{-1}=\frac{N}{M}
$$
for every $K$ and every $i$, and when $k\geq N$ if
$$
\sigma_i^2(\Phi_K)=N{M-1\choose k-1}{M\choose k}^{-1}N^{-1}=\frac{k}{M}
$$
for every $K$ and every $i$. However, this is not possible except in the trivial case $M=N$ where $\Phi$ is an orthonormal basis, or in the cases $k=1$ where this says $\text{trace}(\varphi_i\varphi_i^*)=\|\varphi_i\|^2=N/M$ for every $i$ (so $\Phi$ is an equal norm Parseval frame), or $k=M$ in which case $\sigma_i(\Phi\Phi^*)=1$ for every $i$ which is satisfied by every Parseval frame. In any other case this would mean that every subset of $\Phi$ of size $k$ was a tight frame for its span with bound $N/M$ which cannot happen, in particular if $k\leq N$ then there must be at least one linearly independent subset of size $k$ so this would mean that every subset of size $k$ was a scaled orthonormal set. Nonetheless, we can still try to minimize the variance amongst these singular values. To this end we define the \textit{k-nuclear variance} of a Parseval frame $\Phi$ as
$$
NVar_k(\Phi)=\sum_{|K|=k}\sum_{i=1}^{\min\{N,k\}}(\sigma_i(\Phi_K)-n_{\Phi,k})^2
$$ 
where
$$
n_{\Phi,k}=\min\{N,k\}^{-1}{M\choose k}^{-1}NE(\Phi)
$$
is the average of all of the singular values. By now it is a routine calculation to see that
$$
NVar_k(\Phi)=N{M-1 \choose k-1}-{M\choose k}^{-1}NE_k^2(\Phi)
$$
so that the $k$-nuclear variance is minimized precisely when the $k$-nuclear energy is maximized.

\begin{theorem}\label{eanuke}
The solutions to \eqref{nuke} for $k=2$ are precisely the equiangular Parseval frames when they exist.
\end{theorem}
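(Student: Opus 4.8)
The plan is to reduce the $k=2$ nuclear-energy problem to the total-volume problem of Section \ref{tv} by rewriting each $2\times 2$ nuclear norm in terms of a trace and a determinant. Throughout I assume $N\geq 2$ (so that $\min\{N,2\}=2$); the degenerate case $N=1$, where equiangular and equal norm coincide, follows from a direct concavity argument. For $K=\{i,j\}$ the Gram matrix $\Phi_K^*\Phi_K$ is $2\times 2$ with trace $\|\varphi_i\|^2+\|\varphi_j\|^2$ and determinant $v_2^2(\Phi_K)$, so writing $\sigma_1,\sigma_2$ for the singular values of $\Phi_K$ gives
$$
\|\Phi_K\|_*^2=(\sigma_1+\sigma_2)^2=\sigma_1^2+\sigma_2^2+2\sigma_1\sigma_2=\|\varphi_i\|^2+\|\varphi_j\|^2+2v_2(\Phi_K).
$$
This identity is the one place where $k=2$ is essential: it is exactly the $2\times 2$ case in which $\sqrt{\det}$ equals the product $\sigma_1\sigma_2$.

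Next I would sum over all ${M\choose 2}$ pairs. Since each $\|\varphi_i\|^2$ occurs in $M-1$ pairs we have $\sum_{\{i,j\}}(\|\varphi_i\|^2+\|\varphi_j\|^2)=(M-1)N$, and the definition of total volume gives $\sum_{|K|=2}v_2(\Phi_K)=V_2(\Phi)$, so
$$
\sum_{|K|=2}\|\Phi_K\|_*^2=(M-1)N+2V_2(\Phi).
$$
Applying Cauchy--Schwarz to the ${M\choose 2}$ summands of $NE_2(\Phi)$ and then invoking the upper bound $V_2(\Phi)\leq\sqrt{{M\choose 2}{N\choose 2}}$ from Proposition \ref{volbound} yields
$$
NE_2(\Phi)^2\leq {M\choose 2}\bigl((M-1)N+2V_2(\Phi)\bigr)\leq {M\choose 2}\Bigl((M-1)N+2\sqrt{{M\choose 2}{N\choose 2}}\Bigr)=:B^2,
$$
a fixed upper bound depending only on $M$ and $N$.

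To finish I would verify that an equiangular Parseval frame attains $B$ and that attaining $B$ forces equiangularity. For an equiangular frame every $\Phi_K^*\Phi_K$ has the common spectrum $\{N/M+c_{M,N},\,N/M-c_{M,N}\}$, so all the numbers $\|\Phi_K\|_*$ are equal (making the Cauchy--Schwarz step tight) and all $v_2(\Phi_K)=c_{M,N,2}$ (making the volume step tight); hence $NE_2=B$. Conversely, if $\Phi$ solves \eqref{nuke} then $NE_2(\Phi)=B$, forcing equality in both inequalities above; equality in the volume step gives $V_2(\Phi)=\sqrt{{M\choose 2}{N\choose 2}}$, and by Proposition \ref{volbound} together with Corollary \ref{eav} this means $\Phi$ is equiangular.

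The step I expect to be the main obstacle is not any single computation but the compatibility of the two equality conditions: chaining Cauchy--Schwarz with the volume bound only produces a \emph{useful} bound if some frame saturates both simultaneously, rather than $B$ being a strict over-estimate obtained from incompatible equality cases. The fact that rescues the argument is that an equiangular frame has all of its $2\times 2$ partial Gram matrices sharing the same spectrum---not merely the same determinant---as already observed in the discussion following Proposition \ref{naimarkvol}. Verifying this spectral coincidence is therefore the crux that makes $B$ genuinely attained and the characterization exact.
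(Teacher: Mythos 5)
Your proposal is correct and is essentially the paper's own proof: the same identity $\|\Phi_K\|_*^2=\sigma_1^2+\sigma_2^2+2\sigma_1\sigma_2$, the same Cauchy--Schwarz step over the ${M\choose 2}$ pairs using $\sum_{|K|=2}(\sigma_1^2(\Phi_K)+\sigma_2^2(\Phi_K))=N(M-1)$, and the same appeal to Corollary \ref{eav} to identify the maximizers of $V_2$. The only difference is cosmetic---you substitute the explicit maximum $\sqrt{{M\choose 2}{N\choose 2}}$ of $V_2$ from Proposition \ref{volbound} to get a fixed constant $B$ and then spell out the two equality conditions, whereas the paper leaves $V_2(\Phi)$ in the bound and argues the maximization in one stroke.
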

\begin{proof}
Let $\Phi\in \mathcal{P}(M,N)$.
\begin{eqnarray*}
NE_2(\Phi)=\sum_{|K|=2}\|\Phi_K\|_*&\leq&\left(\frac{M(M-1)}{2}\sum_{|K|=2}\|\Phi_K\|_*^2\right)^{1/2} \\
&=&\left(\frac{M(M-1)}{2}\sum_{|K|=2}(\sigma_1(\Phi_K)+\sigma_2(\Phi_K))^2\right)^{1/2} \\
&=&\left(\frac{M(M-1)}{2}\sum_{|K|=2}\sigma_1^2(\Phi_K)+\sigma_2^2(\Phi_K)+2\sigma_1(\Phi_K)\sigma_2(\Phi_K)\right)^{1/2} \\
&=&(\frac{M(M-1)}{2}(N(M-1)+2V_2(\Phi))^{1/2} \\
&=&\sqrt{\frac{1}{2}MN(M-1)^2+M(M-1)V_2(\Phi)}
\end{eqnarray*}
with equality if and only if $\|\Phi_K\|_*=c$ for every $K$ and where we have used \eqref{nukebound}. Furthermore, this quantity will be maximized when $V_2(\Phi)$ is maximized.

If $\Phi$ is equiangular then there are constants $\sigma_1$ and $\sigma_2$ so that $\sigma_1(\Phi_K)=\sigma_1$ and $\sigma_2(\Phi_K)=\sigma_2$ for every $K$, therefore $\|\Phi_K\|_*=\sigma_1+\sigma_2$ is independent of $K$ which means $\Phi$ saturates the bound above. Now the result follows from Corollary \ref{eav} since equiangular Parseval frames maximize $V_2$.
\end{proof}

Before proceeding we make a few remarks about the above proof. First of all, this proof should be compared to the proof of Theorem \ref{main} and the derivation of the Welch bound, in that we needed to show that two inequalities were satisfied with equality. In this case we needed to show that equiangular Parseval frames have equal 2-nuclear norms in order to saturate the inequality in the proof and then we needed to know that they have equal 2-volumes in order to maximize the right hand side of the inequality. Because of this, we cannot use a similar argument to show that the solutions to \eqref{nuke} when $k=2$ are the same as the solutions to \eqref{volume} when $k=2$ (in fact, we suspect this is not the case) since we do not know if these solutions have equal 2-nuclear norms and we know that they do not have equal 2-volumes.

Next we remark that even if we were able to find examples of Parseval frames with equal $k$-volumes for $k>2$ we would not be able to use a similar method to show that these must be solutions to \eqref{nuke} because when we expand $\|\Phi_K\|_*^2$ we will have products of pairs of singular values of $\Phi_K$ but these do not represent the 2-volumes of $\Phi$ since $k>2$. Considering $\|\Phi_K\|_*^p$ for $p\neq 2$ would not fix this problem. In fact, even if we could find examples of Parseval frames that have equal $k$-nuclear norms we cannot immediately guarantee that they would be solutions to \eqref{nuke}. In this regard it is tempting to consider the problem
\begin{equation}\label{nukevar}
\min_{\Phi\in \mathcal{P}(N,M)}\sum_{|K|=k}(\|\Phi_K\|_*-d_{\Phi,k})^2
\end{equation}
where $d_{\Phi,k}={M\choose k}^{-1}NE_k(\Phi)$ is the average of the $k$-nuclear norms. While this is a perfectly reasonable problem (and it would clearly be minimized if $\|\Phi_K\|_*=d_{\Phi,k}$ for every $K$) we cannot immediately guarantee that solutions to this problem are solutions to \eqref{nuke} since $\sum\|\Phi_K\|_*^2$ is not determined by $(M,N,k)$.

We conclude this section with a result similar to Proposition \ref{vcv} for the vase of \eqref{nuke}.

\begin{proposition}
If $\Phi\in\mathcal{P}(M,N)$ and $\Psi\in\mathcal{P}(M,M-N)$ are Naimark complements then $\Phi$ is a solution to \eqref{nuke} for $(M,N,k)$ if and only if $\Psi$ is a solution to \eqref{nuke} for $(M,M-N,M-k)$.
\end{proposition}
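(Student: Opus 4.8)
The plan is to mimic the one-line proof of Proposition \ref{vcv}: push everything through the singular-value dictionary of Proposition \ref{svn} and show that $NE_{M-k}(\Psi)$ and $NE_k(\Phi)$ differ by an additive constant that depends only on $(M,N,k)$ and not on the particular frame. Once that is established, the equivalence of the maximizers is immediate.

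First I would fix a subset $K\subseteq[M]$ with $|K|=k$ and compare $\|\Phi_K\|_*$ with $\|\Psi_{K^c}\|_*$. By Proposition \ref{svn} and the remark following it, the full multiset of singular values of $\Psi_{K^c}$ is exactly the multiset of singular values of $\Phi_K$ (zeros included) together with an appropriate number of $1$'s filling the empty slots. When $k\le N$ the matrix $\Psi_{K^c}$ carries precisely $M-N-k$ extra singular values equal to $1$, so summing the singular values gives $\|\Psi_{K^c}\|_*=\|\Phi_K\|_*+(M-N-k)$. When $k>N$ one instead applies Proposition \ref{svn} with the roles of $\Phi$ and $\Psi$ reversed, so that $\|\Phi_K\|_*$ is the list carrying the extra $1$'s; this yields the identical relation $\|\Psi_{K^c}\|_*=\|\Phi_K\|_*+(M-N-k)$, now realized as a subtraction since $M-N-k<0$. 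The crucial point is that the number of padding $1$'s is forced by the dimension counts alone, hence is the same constant for every $K$.

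Next I would sum this identity over all $k$-subsets $K$. Since $K\mapsto K^c$ is a bijection between the $k$-subsets of $[M]$ and the $(M-k)$-subsets, the left-hand side sums to $NE_{M-k}(\Psi)$ and the right-hand side to $NE_k(\Phi)+{M\choose k}(M-N-k)$, giving
$$NE_{M-k}(\Psi)=NE_k(\Phi)+{M\choose k}(M-N-k).$$
Because Naimark complementation is a bijection between the relevant Grassmannians, because $NE$ depends only on the Gram matrix, and because the offset ${M\choose k}(M-N-k)$ is independent of the frame, maximizing $NE_{M-k}$ over $\mathcal{P}(M,M-N)$ is exactly the same as maximizing $NE_k$ over $\mathcal{P}(M,N)$. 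Hence $\Phi$ solves \eqref{nuke} for $(M,N,k)$ if and only if $\Psi$ solves it for $(M,M-N,M-k)$.

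I expect the only real care is the bookkeeping in the first step: verifying that the count of padding $1$'s is genuinely constant in $K$ across both regimes $k\le N$ and $k>N$, and that the sign works out to a single formula. But this is precisely the content delivered by Proposition \ref{svn}, so there is no substantive obstacle beyond keeping track of which of the two frames receives the filler singular values.
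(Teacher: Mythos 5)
Your proposal is correct and takes essentially the same route as the paper: both apply Proposition \ref{svn} subsetwise to obtain $\|\Psi_{K^c}\|_*=\|\Phi_K\|_*+(M-N-k)$, sum over the bijection $K\mapsto K^c$, and conclude that $NE_{M-k}(\Psi)$ and $NE_k(\Phi)$ differ by the frame-independent constant $\binom{M}{k}(M-N-k)$, so the maximizers correspond. One bookkeeping nit: the side carrying the padding $1$'s switches at $k=M-N$, not at $k=N$ (for $M-N<k\leq N$ it is $\Phi_K$ that carries $k-(M-N)$ ones, by the second clause of Proposition \ref{svn}, and for $N<k<M-N$ the offset $M-N-k$ is positive rather than a subtraction), but your unified identity holds in every regime, which is precisely what the paper's explicit case analysis verifies.
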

\begin{proof}
First suppose $k\leq N$. If $k\leq M-N$ then by Proposition \ref{svn}
$$
\|\Psi_{K^c}\|_*=\|\Phi_K\|_*+M-N-k
$$
for every $K\subseteq [M]$ with $|K|=k$. Therefore
$$
NE_{M-k}(\Psi)=\sum_{|K|=k}\|\Psi_{K^c}\|_*=NE_k(\Phi)+(M-N-k){M\choose k}.
$$

Similarly, if $k>M-N$ then by Proposition \ref{svn}
$$
\|\Phi_K\|_*=\|\Psi_{K^c}\|_*+k-(M-N)
$$
for every $K\subseteq[M]$ with $|K|=k$, so
$$
NE_k(\Phi)=NE_{M-k}(\Psi)+(k-(M-N)){M\choose k}.
$$

In either case $NE_k(\Phi)$ and $NE_{M-k}(\Psi)$ differ by a constant that depends only on $M,N$, and $k$ but not on $\Phi$ or $\Psi$.

Finally, if $k>N$ then $M-k<M-N$ so we can apply the same argument with the roles of $\Phi$ and $\Psi$ reversed.
\end{proof}

\section{Equal norm frames}\label{ens}

In this section we will discuss some results for equal norm (but not necessarily Parseval) frames. Usually this kind of dicussion would focus on unit norm frames (i.e., frames with $\|\varphi_i\|=1$ for every $i$) but in order to be consistent with the rest of this paper we will consider frames $\{\varphi_i\}_{i=1}^M$ for $\mathbb{F}^N$ that satisfy $\|\varphi_i\|=\sqrt{N/M}$ for every $i$. To this end let
$$
\mathcal{E}(M,N)=\{\{\varphi_i\}_{i=1}^M\subseteq\mathbb{F}^N:\|\varphi_i\|=\sqrt{\frac{N}{M}}\text{ for every }i\in [M]\}.
$$
Note that we do not require that the elements of $\mathcal{E}(M,N)$ are frames. Note also that if $\Phi\in\mathcal{E}(M,N)$ is a tight frame then it is a Parseval frame by our choice of normalization.

First consider problem \eqref{2} where the optimization is over $\mathcal{E}(M,N)$ rather than $\mathcal{P}(M,N)$. In this case it is not hard to see that $TC(\Phi)$ will be maximized when $\varphi_1=\cdots=\varphi_M$ which is not a frame. Therefore this problem is not interesting as stated. A slightly more interesting problem is to try to minimize $TC(\Phi)$ over $\mathcal{E}(M,N)$ in which case the solutions are repeated copies of a scaled orthonormal basis, see \cite{EO12}.

We now turn our attention to problem \eqref{volume} for the case of equal norm frames. To be precise, we consider the problem
\begin{equation}\label{envol}
\max_{\Phi\in\mathcal{E}(M,N)}V_k(\Phi).
\end{equation}

\begin{theorem}\label{fp}
If $\Phi\in\mathcal{E}(M,N)$ and $k\leq N$ then
$$
\sum_{|K|=k}v_k^2(\Phi_K)\leq {N \choose k}
$$
with equality if and only if $\Phi$ is Parseval.
\end{theorem}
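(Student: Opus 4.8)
The plan is to translate the sum of squared $k$-volumes into a symmetric function of the eigenvalues of the frame operator and then optimize it under the trace constraint forced by the equal norm hypothesis. Exactly as in the proof of Proposition \ref{prodcos}, the quantity $\sum_{|K|=k}v_k^2(\Phi_K)$ is the sum of all principal $k\times k$ minors of $\Phi^*\Phi$, and this equals the $k$-th elementary symmetric polynomial $e_k$ of the eigenvalues of $\Phi^*\Phi$. The $M\times M$ matrix $\Phi^*\Phi$ has the same nonzero eigenvalues as the frame operator $\Phi\Phi^*$, so its spectrum consists of the eigenvalues $\mu_1,\dots,\mu_N\geq 0$ of $\Phi\Phi^*$ together with $M-N$ extra zeros. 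Since $k\leq N$, every $k$-fold product involving one of these zeros drops out, and therefore
$$
\sum_{|K|=k}v_k^2(\Phi_K)=e_k(\mu_1,\dots,\mu_N).
$$

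Next I would record the one constraint coming from the equal norm assumption: since $\|\varphi_i\|^2=N/M$ for each $i$,
$$
\sum_{i=1}^N\mu_i=\mathrm{trace}(\Phi\Phi^*)=\mathrm{trace}(\Phi^*\Phi)=\sum_{i=1}^M\|\varphi_i\|^2=N.
$$
Thus everything reduces to bounding $e_k$ over nonnegative vectors $(\mu_1,\dots,\mu_N)$ with $e_1=\sum_i\mu_i=N$, which is precisely the setting of Maclaurin's inequality. It gives
$$
\left(\frac{e_k(\mu)}{\binom{N}{k}}\right)^{1/k}\leq\frac{e_1(\mu)}{N}=1,
$$
hence $e_k(\mu)\leq\binom{N}{k}$, the desired bound. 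Equivalently, one can argue from majorization: the vector $(1,\dots,1)$ is majorized by every feasible $\mu$, and since $e_k$ is Schur-concave this places the maximum at $(1,\dots,1)$, where its value is $\binom{N}{k}$.

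For the equality statement I would use the sharp case of Maclaurin's inequality: for $k\geq 2$, equality forces $\mu_1=\dots=\mu_N$, and together with $\sum_i\mu_i=N$ this gives $\mu_i=1$ for all $i$, i.e. $\Phi\Phi^*=I$, so $\Phi$ is Parseval; the converse is just Proposition \ref{prodcos}. The step I expect to be the main obstacle is the clean handling of this equality analysis, for two reasons. First, because $\mathcal{E}(M,N)$ does not require its elements to span, one must allow some $\mu_i=0$ and therefore apply the equality condition to nonnegative rather than strictly positive reals; if citing the classical equality case feels unsafe, the safe fallback is a direct smoothing argument, noting that shifting mass from a larger eigenvalue to a smaller one changes $e_k$ at first order by $(\mu_a-\mu_b)\,e_{k-2}(\text{remaining coordinates})$, which is positive near the balanced configuration and lets one conclude that any maximizer is balanced. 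Second, I would flag that the equality characterization genuinely needs $k\geq 2$: when $k=1$ the left-hand side is $\sum_i\|\varphi_i\|^2=N=\binom{N}{1}$ for \emph{every} $\Phi\in\mathcal{E}(M,N)$, so equality then holds whether or not $\Phi$ is Parseval.
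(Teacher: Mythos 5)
Your proposal is correct and takes essentially the same route as the paper's proof: both identify $\sum_{|K|=k}v_k^2(\Phi_K)$ with the $k$-th elementary symmetric polynomial of the eigenvalues of $\Phi\Phi^*$ via the characteristic polynomial of $\Phi^*\Phi$, normalize the trace to $N$ using the equal norm hypothesis, and conclude by Maclaurin's inequality with its equality case. Your two caveats are in fact sharper than the paper's treatment --- the equality characterization does fail for $k=1$ (where every $\Phi\in\mathcal{E}(M,N)$ attains equality, Parseval or not, so the theorem's ``if and only if'' implicitly requires $k\geq 2$), and allowing zero eigenvalues (since elements of $\mathcal{E}(M,N)$ need not span) is a legitimate point the paper glosses over, though the equality case of Maclaurin extends to nonnegative tuples for $k\geq 2$ as your smoothing argument shows.
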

\begin{proof}
Let $\lambda_1,...,\lambda_N$ be the eigenvalues of the frame operator $\Phi\Phi^*$ which are the same as the nonzero eigenvalues of the Gram matrix $\Phi^*\Phi$. Let $p(\lambda)=\prod_{i=1}^N(\lambda-\lambda_i)=\lambda^N+a_{N-1}\lambda^{N-1}+\cdots+a_1\lambda+a_0$ be the characteristic polynomial of $\Phi\Phi^*$ and note that
$$
a_{N-k}=(-1)^k\sum_{|K|=k}\prod_{i\in K}\lambda_i.
$$

Now observe that $\sum_{|K|=k}v_k^2(\Phi_K)$ is the sum of all principal $k\times k$ minors of the Gram matrix $\Phi^*\Phi$ and therefore $(-1)^k\sum_{|K|=k}v_k^2(\Phi_K)$ is the coefficient of $\lambda^{M-k}$ in the characteristic polynomial of $\Phi^*\Phi$ (see section 7.1 in \cite{LAbook}), but the characteristic polynomial of $\Phi^*\Phi$ is $\lambda^{M-N}p(\lambda)$, so we have
\begin{equation}\label{w1}
\sum_{|K|=k}v_k^2(\Phi_K)=\sum_{|K|=k}\prod_{i\in K}\lambda_i.
\end{equation}
Maclaurin's inequality says that
\begin{equation}\label{w2}
\sum_{|K|=k}\prod_{i\in K}\lambda_i\leq (\frac{1}{N}\sum_{i=1}^N\lambda_i)^k{N\choose k},
\end{equation}
with equality if and only if $\lambda_1=\cdots=\lambda_N$, which means $\Phi$ is Parseval. Finally,
\begin{eqnarray}
\frac{1}{N}\sum_{i=1}^N\lambda_i&=&\text{trace}(\Phi\Phi^*) \notag  \\ 
&=&\text{trace}(\Phi^*\Phi) \notag \\
&=&\frac{1}{N}\sum_{i=1}^M\|\varphi_i\|^2=(\frac{1}{N})(M)(\frac{N}{M})=1. \label{w3}
\end{eqnarray}
The result now follows by combining \eqref{w1}, \eqref{w2}, and \eqref{w3}.
\end{proof}

Aside from giving us a bound on $\sum v_k^2(\Phi_K)$, Theorem \ref{fp} also tells us that this quantity is maximized (over $\mathcal{E}(M,N)$) precisely by the Parseval frames. For the special case $k=2$ we have $v_2^2(\Phi_{\{i,j\}})=\frac{N^2}{M^2}-|\langle\varphi_i,\varphi_j\rangle|^2$, so $\sum v_2^2(\Phi_{\{i,j\}})$ is maximized precisely when $\sum_{i\neq j}|\langle\varphi_i,\varphi_j\rangle|^2$ is minimized. This latter quantity is known as the \textit{frame potential} and the well known result in \cite{BF03} states that this is minimized by tight frames. Therefore Theorem \ref{fp} can be seen as a generalization of this result to the cases $k>2$.

\begin{corollary}
If $\Phi\in\mathcal{E}(M,N)$ and $k\leq N$ then
\begin{equation}\label{tvwelch}
V_k(\Phi)\leq \sqrt{{M\choose k}{N \choose k}}
\end{equation}
and
\begin{equation}\label{volwelch}
\min_{|K|=k}v_k(\Phi_K)\leq c_{M,N,k}
\end{equation}
with equality if and only if $\Phi$ is Parseval and $v_k(\Phi_K)=c_{M,N,k}$ for every $K$ with $|K|=k$.
\end{corollary}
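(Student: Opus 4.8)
The plan is to derive both inequalities directly from Theorem \ref{fp}, since that result already controls the sum of squares $\sum_{|K|=k}v_k^2(\Phi_K)$, and then to read off the two bounds via standard elementary inequalities applied to the family $\{v_k(\Phi_K)\}_{|K|=k}$.

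For the first bound \eqref{tvwelch}, I would apply the Cauchy--Schwarz inequality to the $\binom{M}{k}$-dimensional nonnegative vector with entries $v_k(\Phi_K)$ paired against the all-ones vector. This gives
$$
V_k(\Phi)=\sum_{|K|=k}v_k(\Phi_K)\leq\left(\sum_{|K|=k}v_k^2(\Phi_K)\right)^{1/2}\binom{M}{k}^{1/2},
$$
and Theorem \ref{fp} bounds the first factor by $\binom{N}{k}^{1/2}$, yielding $V_k(\Phi)\leq\sqrt{\binom{M}{k}\binom{N}{k}}$. For the second bound \eqref{volwelch} I would use the even simpler observation that the minimum of a collection is at most its root-mean-square:
$$
\binom{M}{k}\left(\min_{|K|=k}v_k(\Phi_K)\right)^2\leq\sum_{|K|=k}v_k^2(\Phi_K)\leq\binom{N}{k},
$$
so dividing by $\binom{M}{k}$ and taking square roots gives $\min_{|K|=k}v_k(\Phi_K)\leq c_{M,N,k}$, recalling that $c_{M,N,k}^2=\binom{N}{k}/\binom{M}{k}$.

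The part requiring the most care is the equality analysis, since in each argument equality forces two separate conditions to hold simultaneously. Equality in Cauchy--Schwarz (respectively, in the min-versus-average step) forces the volumes $v_k(\Phi_K)$ to be \emph{constant} across all $K$, while equality in Theorem \ref{fp} forces $\Phi$ to be Parseval. Assuming both, the common value $c$ satisfies $\binom{M}{k}c^2=\binom{N}{k}$ by the equality clause of Theorem \ref{fp}, hence $c=c_{M,N,k}$; conversely, a Parseval frame with $v_k(\Phi_K)=c_{M,N,k}$ for every $K$ obviously saturates both inequalities. I do not expect a genuine obstacle here: the corollary is essentially a repackaging of Theorem \ref{fp} through classical norm inequalities, and it directly generalizes the Welch bound (the case $k=1$, and via the frame-potential identity the case $k=2$). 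The one point I would verify carefully is that the two equality characterizations are mutually compatible and together pin the constant down to exactly $c_{M,N,k}$, so that the stated biconditional is correct in both directions.
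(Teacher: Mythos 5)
Your proposal is correct and follows exactly the route the paper intends: the corollary is stated without proof as an immediate consequence of Theorem~\ref{fp}, and your Cauchy--Schwarz step for \eqref{tvwelch}, the min-versus-root-mean-square step for \eqref{volwelch}, and the combined equality analysis (constancy of the volumes from the elementary inequality, Parsevality from the equality clause of Theorem~\ref{fp}, which together pin the constant to $c_{M,N,k}$) are precisely the intended derivation, mirroring the Cauchy--Schwarz argument in the proof of Theorem~\ref{main}. No gaps; the argument is complete as written.
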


If we now consider \eqref{volwelch} for the special case $k=2$ we see that
$$
\min_{\{i,j\}}v_2^2(\Phi_{\{i,j\}})\leq\frac{N(N-1)}{M(M-1)}.
$$
Again using the observation that $v_2^2(\Phi_{\{i,j\}})=\frac{N^2}{M^2}-|\langle\varphi_i,\varphi_j\rangle|^2$ we see that this is equivalent to
\begin{eqnarray*}
\max_{i\neq j}|\langle\varphi_i,\varphi_j\rangle|^2&\geq&\frac{N^2}{M^2}-\frac{N(N-1)}{M(M-1)} \\
&=&\frac{N(M-N)}{M^2(M-1)}=c_{M,N}^2.
\end{eqnarray*}

Recall that we have chosen to scale our vectors so that $\|\varphi_i\|=\sqrt{N/M}$. If we rescale so that $\|\varphi_i\|=1$ the above inequality becomes
$$
\max_{i\neq j}|\langle\varphi_i,\varphi_j\rangle|\geq\sqrt{\frac{M-N}{N(M-1)}}
$$
which is the well known Welch bound \cite{W74}. Therefore \eqref{volwelch} can be seen as a generalization of the Welch bound to the cases $k>2$. It is worth noting that as per the discussion in Section \ref{tv}, this bound cannot be saturated very often.

The problem
\begin{equation}\label{gf}
\min_{\Phi\in\mathcal{E}(M,N)}\max_{i\neq j}|\langle\varphi_i,\varphi_j\rangle|
\end{equation}
was initially posed in \cite{SH03} and is very well studied. The solutions to this problem are called \textit{Grassmannian frames} because they correspond to optimal line packings, i.e., optimal point configurations in the Grassmannian of $1$-dimensional subspaces. See \cite{CHS96} for a good introduction and \cite{JKM19} for an up to date survey of these ideas.

We now remark that even though \eqref{envol} is a well defined problem and the solutions coincide with the solutions to \eqref{volume} at least in some cases, we cannot interpret these solutions as minimizing the variance as in \eqref{volvar}. This is because $\sum v_k^2(\Phi)$ is not fixed (similar to the situation with \eqref{nukevar}) but only bounded. Indeed, if we take an equiangular Parseval frame and remove one (or several) vectors then the remaining set is still equiangular (but not Parseval) and so the variance in the $2$-dimensional volumes would be 0, but such a frame need not be a solution to \eqref{envol}.

Even though we do not have a notion of Naimark complement for equal norm frames the definition of $CV_k(\Phi)$ still makes sense, so we can consider the analog of \eqref{cvmax} where we optimize over $\mathcal{E}(M,N)$ rather than $\mathcal{P}(M,N)$, i.e.,
\begin{equation}\label{encv}
\max_{\Phi\in\mathcal{E}(M,N)}CV_k(\Phi).
\end{equation}

\begin{proposition}
If $\Phi\in\mathcal{E}(M,N)$ and $k\geq N$ then
$$
CV_k(\Phi)\leq\sqrt{{M\choose k}{M-N\choose M-k}}.
$$
with equality if and only if $\Phi$ is Parseval and
$$
cv_k(\Phi_K)=\sqrt{{M\choose k}^{-1}{M-N\choose M-k}}
$$
for every $K\subseteq[M]$ with $|K|=k$.
\end{proposition}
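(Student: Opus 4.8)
The plan is to reduce everything to the $k=N$ case of Theorem \ref{fp} by means of the Cauchy--Binet formula, mirroring the structure of that theorem's proof. Since $k\geq N$, for each $K$ with $|K|=k$ the matrix $\Phi_K$ is $N\times k$ with $N\leq k$, so Cauchy--Binet gives
\[
cv_k^2(\Phi_K)=\det(\Phi_K\Phi_K^*)=\sum_{\substack{J\subseteq K\\ |J|=N}}v_N^2(\Phi_J),
\]
where $\Phi_J$ denotes the $N\times N$ submatrix on the columns indexed by $J$ and $v_N^2(\Phi_J)=|\det\Phi_J|^2$. This is the one substantive idea: the squared complementary $k$-volume decomposes into the ordinary squared $N$-volumes of the full-dimensional subsets it contains, which are exactly the quantities that Theorem \ref{fp} already controls.

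Next I would sum over all $K$ of size $k$ and interchange the order of summation. A fixed $N$-subset $J$ is contained in exactly ${M-N\choose k-N}={M-N\choose M-k}$ subsets $K$ of size $k$, so
\[
\sum_{|K|=k}cv_k^2(\Phi_K)={M-N\choose M-k}\sum_{|J|=N}v_N^2(\Phi_J).
\]
Applying Theorem \ref{fp} with $k=N$ yields $\sum_{|J|=N}v_N^2(\Phi_J)\leq{N\choose N}=1$ for every $\Phi\in\mathcal{E}(M,N)$, with equality if and only if $\Phi$ is Parseval. Hence $\sum_{|K|=k}cv_k^2(\Phi_K)\leq {M-N\choose M-k}$, again with equality precisely when $\Phi$ is Parseval.

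Finally I would invoke Cauchy--Schwarz to pass from the sum of the $cv_k(\Phi_K)$ to the sum of their squares:
\[
CV_k(\Phi)=\sum_{|K|=k}cv_k(\Phi_K)\leq\left({M\choose k}\sum_{|K|=k}cv_k^2(\Phi_K)\right)^{1/2}\leq\sqrt{{M\choose k}{M-N\choose M-k}},
\]
which is the claimed bound. The equality analysis is the only place requiring care, since two inequalities must be reconciled: Cauchy--Schwarz is tight exactly when all $cv_k(\Phi_K)$ are equal, while the second inequality is tight exactly when $\Phi$ is Parseval. When both hold, the common value of $cv_k(\Phi_K)$ is forced to equal $\sqrt{{M\choose k}^{-1}{M-N\choose M-k}}$, which recovers the stated equality condition.

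I expect no serious obstacle here; the argument is a complementary-volume reflection of Theorem \ref{fp}, and the only points needing attention are the combinatorial count ${M-N\choose k-N}={M-N\choose M-k}$ and the bookkeeping of the two separate equality conditions. The Cauchy--Binet decomposition in the first display is what does the real work, converting a statement about the $N\times N$ Gramians $\Phi_K\Phi_K^*$ of the frame operator into one about ordinary volumes already bounded by Theorem \ref{fp}.
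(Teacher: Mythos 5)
Your proposal is correct and follows essentially the same route as the paper: Cauchy--Binet to decompose $cv_k^2(\Phi_K)$ into squared $N$-volumes, the count ${M-N\choose M-k}$ from interchanging the sums, and Cauchy--Schwarz with the two equality conditions reconciled exactly as the statement requires. The only cosmetic difference is that you bound $\sum_{|J|=N}v_N^2(\Phi_J)\leq 1$ by citing Theorem \ref{fp} at $k=N$, whereas the paper applies Cauchy--Binet a second time to identify this sum as $\det(\Phi\Phi^*)$ and then invokes the arithmetic--geometric mean inequality, which is the same bound in disguise since the $k=N$ case of Maclaurin's inequality used in Theorem \ref{fp} is precisely AM--GM.
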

\begin{proof}
This is essentially the same as the proof of Theorem 1 in \cite{MB96} but we will include it for completeness.

For a fixed subset $K\subseteq[M]$ with $|K|=k$ we have
\begin{eqnarray*}
cv_k^2(\Phi_K)&=&\det(\Phi_K\Phi_K^*)=\sum_{\substack{J\subseteq K \\ |J|=N}}\det(\Phi_J\Phi_J^*)
\end{eqnarray*}
by the Cauchy-Binet formula. Therefore
\begin{eqnarray*}
\sum_{|K|=k}cv_k^2(\Phi_K)&=&\sum_{|K|=k}\sum_{\substack{J\subseteq K \\ |J|=N}}\det(\Phi_J\Phi_J^*) \\
&=&{M-N\choose M-k}\sum_{|J|=N}\det(\Phi_J\Phi_J^*) \\
&=&{M-N\choose M-k}\det(\Phi\Phi^*)
\end{eqnarray*} 
where the last line again follows from the Cauchy-Binet formula. Since $\Phi\in\mathcal{E}(M,N)$ the arithmetic-geometric mean inequality says that $\det(\Phi\Phi^*)\leq 1$ with equality if and only if $\Phi$ is Parseval. Therefore we have shown that
$$
\sum_{|K|=k}cv_k^2(\Phi_K)\leq{M-N\choose M-k}
$$
from which the result readily follows.
\end{proof}

We now turn our attention to the $k$-nuclear energy $NE_k(\Phi)$ for $\Phi\in\mathcal{E}(M,N)$,i.e., we consider the problem
\begin{equation}\label{enne}
\max_{\Phi\in\mathcal{E}(M,N)}NE_k(\Phi).
\end{equation}
In this case we have
\begin{eqnarray*}
\sum_{|K|=k}\sum_{i=1}^{\min\{k,N\}}\sigma_i^2(\Phi_K)&=&\sum_{|K|=k}\text{trace}(\Phi_K^*\Phi_K) \\
&=&\sum_{|K|=k}\sum_{i\in K}\|\varphi_i\|^2 \\
&=&\frac{kN}{M}{M\choose k}=N{M-1\choose k-1}.
\end{eqnarray*}

We can use this to derive an upper bound on $NE_k(\Phi)$. However, as is the case for Parseval frames, this bound would only be saturated when all of the singular values of every subset of size $k$ were equal, which cannot happen except in the trivial cases.

The same argument used to prove Theorem \ref{eanuke} combined with \eqref{tvwelch} proves the following:

\begin{proposition}
The solutions to
$$
\max_{\Phi\in\mathcal{E}(M,N)}NE_2(\Phi)
$$
are precisely the equiangular Parseval frames when they exist.
\end{proposition}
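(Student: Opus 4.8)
The plan is to run the proof of Theorem~\ref{eanuke} essentially unchanged, replacing the two places where that argument used facts special to Parseval frames with their equal-norm analogues. First I would apply the Cauchy--Schwarz inequality to the defining sum,
$$
NE_2(\Phi)=\sum_{|K|=2}\|\Phi_K\|_*\leq\left(\frac{M(M-1)}{2}\sum_{|K|=2}\|\Phi_K\|_*^2\right)^{1/2},
$$
with equality exactly when $\|\Phi_K\|_*$ is independent of $K$. Expanding $\|\Phi_K\|_*^2=\sigma_1^2(\Phi_K)+\sigma_2^2(\Phi_K)+2\sigma_1(\Phi_K)\sigma_2(\Phi_K)$ and recalling that $v_2(\Phi_K)=\sigma_1(\Phi_K)\sigma_2(\Phi_K)$, the inner sum becomes $\sum_{|K|=2}\bigl(\sigma_1^2(\Phi_K)+\sigma_2^2(\Phi_K)\bigr)+2V_2(\Phi)$.

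The crucial observation is that for $\Phi\in\mathcal{E}(M,N)$ the first of these sums is still a constant depending only on $M$ and $N$: the equal-norm computation carried out just before this proposition gives $\sum_{|K|=2}\sum_i\sigma_i^2(\Phi_K)=N{M-1\choose 1}=N(M-1)$, which is precisely the value used in the Parseval proof. Hence I would arrive at the identical upper bound
$$
NE_2(\Phi)\leq\sqrt{\tfrac{1}{2}MN(M-1)^2+M(M-1)V_2(\Phi)},
$$
so that, over $\mathcal{E}(M,N)$, maximizing $NE_2$ again reduces to maximizing $V_2(\Phi)$.

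At this point I would invoke \eqref{tvwelch} in place of Corollary~\ref{eav}: over $\mathcal{E}(M,N)$ one has $V_2(\Phi)\leq\sqrt{{M\choose 2}{N\choose 2}}$ with equality if and only if $\Phi$ is Parseval and $v_2(\Phi_K)=c_{M,N,2}$ for every $K$, i.e.\ if and only if $\Phi$ is an equiangular Parseval frame. Thus the right-hand side above is itself maximized exactly by equiangular Parseval frames, and any $\Phi\in\mathcal{E}(M,N)$ that attains the global maximum of $NE_2$ must saturate the $V_2$ inequality and therefore be equiangular Parseval. Conversely, for an equiangular Parseval frame the two singular values $\sigma_1(\Phi_K),\sigma_2(\Phi_K)$ are the same for every pair $K$ (as noted in Section~\ref{tv} and used in the proof of Theorem~\ref{eanuke}), so $\|\Phi_K\|_*$ is constant, the Cauchy--Schwarz step is an equality, and the bound is attained. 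Combining these shows the maximizers are precisely the equiangular Parseval frames.

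The only genuine subtlety, and the step I would verify most carefully, is the equality analysis at the end: I must confirm that the frames maximizing the upper bound (the equiangular Parseval frames) simultaneously saturate the Cauchy--Schwarz step, and that no non-Parseval or non-equiangular competitor can tie them. The former is immediate from the constancy of the two singular values of an equiangular Parseval frame, while the latter is forced by the equality clause in \eqref{tvwelch}, which pins any frame achieving the extremal $V_2$ down to being equiangular Parseval. Beyond tracking these two equality conditions there is no real difficulty, since the structural work has already been done in deriving \eqref{tvwelch} and Theorem~\ref{eanuke}.
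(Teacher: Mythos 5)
Your proposal is correct and takes essentially the same approach as the paper: the paper's proof is precisely the instruction to rerun the argument of Theorem~\ref{eanuke} with the equal-norm trace identity $\sum_{|K|=2}\sum_i\sigma_i^2(\Phi_K)=N(M-1)$ and the bound \eqref{tvwelch} substituted for Corollary~\ref{eav}, which is exactly what you did. Your equality bookkeeping is also the intended one, with the equality clause of \eqref{tvwelch} (Parseval plus equal $2$-volumes, hence equiangular by Corollary~\ref{C1}) forcing maximizers to be equiangular Parseval, and the constancy of the singular values $\sigma_1(\Phi_K),\sigma_2(\Phi_K)$ showing those frames attain the bound.
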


\section{Discussion and conclusion}\label{dc}

In this paper we have stated several optimization problems for which equiangular Parseval frames are the solutions when they exist. However, all of the functions we consider are continuous, and since $\mathcal{P}(M,N)$ and $\mathcal{E}(M,N)$ are compact, the corresponding optimization problems always have solutions, even when there are no equiangular Parseval frames. Therefore the most natural question to ask is what are the solutions in the cases when there are no equiangular Parseval frames? However, we do not expect to ever be able to completely answer this question, primarily because the optimization problems we consider are hard. Nonetheless, there are many questions we can ask about solutions (or approximate solutions) to these problems that may have nice answers. Of course, the most obvious question is to find explicit examples that are solutions to any of the problems presented in this paper that are not equiangular Parseval frames, even for small values of $N,M$, and $k$.

The first set of questions we will pose relate to any structural properties that the solutions to these problems might have. One very natural question in this regard is: Do the solutions to \eqref{2}, \eqref{volume} and/or \eqref{nuke} have to be equal norm? By Theorem \ref{tcnorm} we know that there has to be some control on the norms of solutions to \eqref{2}, and we should be able to use a similar technique to show the same thing for the solutions to \eqref{volume} and \eqref{nuke}. Even if it turns out that the solutions are not equal norm in general it would be interesting to have quantitative bounds on how big and small the norms can be. Similarly, we can ask if the solutions to \eqref{envol}, \eqref{encv}, and \eqref{enne} need to be Parseval. It is known that solutions to \eqref{gf} need not be Parseval, see \cite{BK06} or \cite{CHS96}.

We could insist that our solutions be equal norm Parseval frames by maximizing the functions $TV(\Phi)$, $V_k(\Phi)$, or $NE_k(\Phi)$ over $\mathcal{P}(M,N)\cap\mathcal{E}(M,N)$. This set is compact and all of these functions are continuous so solutions certainly exist. This set is also known to be connected \cite{CMS17} and explicit formulas for the tangent spaces at the nonsingular points are known \cite{S11} so this type of optimization is possible, but the geometry of $\mathcal{P}(M,N)\cap\mathcal{E}(M,N)$ is much more complicated than that of either $\mathcal{P}(M,N)$ or $\mathcal{E}(M,N)$.

If the solutions to any of these problems do turn out to be equal norm Parseval frames they very likely have some additional structure. For example, biangular Parseval frames are Parseval frames where the off diagonal entries of the Gram matrix can have two different values in magnitude. While these do not exist for every $M$ and $N$ they do exist in many cases where equiangular Parseval frames do not. More generally, a frame is \textit{equidistributed} if the columns of the magnitudes of the Gram matrix are permutations of one another, these exist for every $M$ and $N$. In \cite{BH15} the authors show that under certain conditions the solutions to problems similar to \eqref{2} are equidistributed. We could say the $k$-dimensional volumes of a $\Phi$ are equidistributed if $\{v_k(\Phi_K):i\in K\}$ is the same for every $i\in[M]$. It is very likely that at least in some cases the solutions to \eqref{volume} and \eqref{envol} will exhibit some structure similar to this.

Just as important as properties that solutions to these problems could have are properties that they do not have. For examle, in Theorem \ref{tcnorm} we showed that the solutions to \eqref{2} do not contain any zero vectors, and a similar argument should work to show that the solutions to the other problems presented in this paper do not contain any zero vectors. However the way that we showed that the solutions to \eqref{2} do not contain zero vectors was to start with a frame that had a zero and then replace the zero with a scaled copy of one of the other vectors. But we suspect that the solutions to \eqref{2} as well as the other problems cannot contain any parallel vectors. A frame is called \textit{orthodecomposable} is there is a subset $S\subseteq[M]$ such that $\langle\varphi_i,\varphi_j\rangle=0$ whenever $i\in S$ and $j\not\in S$. We suspect that the solutions to all of the problems presented in this paper cannot be orthodecomposable.

In fact, from the perspective of \eqref{var} it looks like the magnitudes of the off diagonal entries of the Gram matrix of a solution to \eqref{var} (and equivalently \eqref{2}) are clustered very tightly around their mean which suggests, in particular, that $|\langle\varphi_i,\varphi_j\rangle|>0$, i.e., if $\Phi$ solves \eqref{2} then $\Phi$ does not contain any orthogonal pair of vectors. Similarly it is natural to think that if $\Phi$ is a solution to \eqref{volume} or \eqref{envol} then all $k$-dimensional volumes of $\Phi$ must be positive, but this is not immediately clear. We could ask a similar question about the solutions to \eqref{nuke}. We define $\text{spark}(\Phi)$ to be the size of the smallest linearly dependent subset of $\Phi$.  To pose a precise question: If $\Phi$ is a solution to \eqref{volume}, \eqref{nuke}, \eqref{envol}, or \eqref{enne} is it necessarily true that $\text{spark}(\Phi)>k$?

On a related note it is natural to ask if there is any relationship between the solutions to \eqref{volume} for different values of $k$. For example, by Theorem \ref{equalvol} we know that if $\Phi$ has equal $k$-dimensional volumes then it is a solution to \eqref{volume} for every $j\leq k$, however we also know that there are not very many examples of Parseval frames with equal $k$-dimensional volumes. Is it possible for a given frame $\Phi$ to be solutions to \eqref{volume} for multiple values of $k$? If $\Phi$ is a solution to \eqref{volume} for a certain value of $k$ does it have to be a solution for $k-1$ as well? One thing worth noting in thus regard is that many equiangular Parseval frames have a relatively low spark, see \cite{FMT12}, so if it is true that solutions to \eqref{volume} must have $\text{spark}(\Phi)>k$ then this would rule these frames out as solutions for many values of $k$ even though they are solutions for $k=2$. We can ask the same questions about the solutions to \eqref{nuke}, \eqref{envol}, \eqref{encv}, or \eqref{enne} as well.

It is also natural to wonder whether there is any relationship between the solutions to \eqref{volume}, \eqref{cvmax}, and \eqref{nuke} for a fixed value of $k$ (and similarly for \eqref{envol}, \eqref{encv}, and \eqref{enne}). Is it possible for the same frame to simultaneously be solutions to \eqref{volume} and \eqref{nuke} for $k>2$? Are Naimark complements of solutions to \eqref{volume} also solutions to \eqref{cvmax} in general (for the same $k$)? If they are then Proposition \ref{vcv} would say that the solutions to \eqref{volume} and \eqref{cvmax} are the same. Similarly we can ask if the solutions to \eqref{envol} and \eqref{encv} are the same. We can also ask the same question about the nuclear energy, i.e., are the solutions to \eqref{nukevar} the same for $k$ and $M-k$ (and the same for \eqref{enne}). Another question is if the solutions to \eqref{nukevar} are, in fact, solutions to \eqref{nuke}.

Another type of question that we can consider is that of getting better bounds on the optimal values of the functions that we are optimizing. For example, in Proposition \ref{tcbound} we showed that $TC(\Phi)\geq\max\{N,M-N\}$ for any equal norm frame, but if $\Phi$ is a solution to \eqref{2} then $TC(\Phi)$ is most likely quite a bit bigger than that, so it is desirable to have a tighter lower bound. For example, one natural question is does there exist a constant $c>0$ independent of $N$ and $M$ such that there always exists  Parseval frame $\Phi$ with
$$
TC(\Phi)\geq c\sqrt{N(M-N)(M-1)}?
$$
We can ask a similar question about $V_k(\Phi)$ with the upper bound given in Proposition \ref{volbound} (and \eqref{volwelch}, which is the same bound). Of course it is also desirable to have tighter upper bounds in the cases where the bounds we have cannot be saturated. For the case of $NE_k(\Phi)$ we don't even have a good upper bound since the bound we get from \eqref{nukebound} can never be saturated so a bound in either direction is desirable.

One way to approach finding bounds on the optimal values of these functions is to try to calculate the expected value of them for a random Parseval frame or a random equal norm frame. It is clear what is meant by a random equal norm frame. For a random Parseval frame recall the correspondence between unitary equivalence classes of Parseval frames in $\mathcal{P}(M,N)$ and the Grassmannian of $N$-dimensional subspaces of $\mathbb{F}^M$. The uniform distribution on the Grassmannian is a well understood distribution so when we say random Parseval frame we really mean the unitary equivalence class of Parseval frames whose Gram matrix is the projection onto a subspace chosen from this distribution. Since all of the functions we have considered are invariant under unitary transformations this is a viable method to obtain the desired bounds. Aside from providing bounds on the optimal values, it is quite possible that randomly chosen Parseval frames or equal norm frames will be very close to optimal which would say that there is some kind of concentration of measure.

Many applications of frame theory call for frames that have some property that requires every subset of a certain size to have some nice property. Often times researchers try to understand these properties in terms of the \textit{coherence} of the frame, that is $\max_{i\neq j}|\langle\varphi_i,\varphi_j\rangle|$. While the coherence does a good job of capturing the behavior of subsets of size 2, it generally does not do a very good job of capturing the behavior of larger subsets. Therefore, a notion of coherence for larger subsets is needed. The quantities $V_k(\Phi)$ and $NE_k(\Phi)$ are two such candidates. Probably the most famous such example is compressed sensing, which is the problem of recovering sparse vectors from incomplete linear measurements via $\ell_1$-minimization. A thorough introduction to compressed sensing is beyond the scope of this paragraph, but we refer to the books \cite{EK12,FR13}. One of the most important properties in the theory of compressed sensing is the \textit{restricted isometry property} introduced in \cite{CT05} which says that for an $N\times M$ matrix $\Phi$ there is a $\delta>0$ so that
$$
(1-\delta)\|x\|^2\leq\|\Phi x\|^2\leq(1+\delta)\|x\|^2
$$
whenever $x$ is $k$-sparse (i.e., $x$ has at most $k$ nonzero entries). It is clear from the definition that an RIP-matrix should have relatively high $k$-nuclear energy, so the question is if the converse of this is true, i.e., if $\Phi$ has a high enough $k$-nuclear energy does it need to be an RIP-matrix? More specifically, are the solutions to \eqref{nuke} and \eqref{enne} RIP-matrices? Are they, in some sense, the best possible RIP-matrices? If they are not then are they good for compressed sensing in some other sense? Also, it seems intuitively true that the solutions to \eqref{volume} and \eqref{envol} should also be good for compressed sensing, but it is not clear from the definition that they need to have very good RIP properties, so a more open ended question is to what extent can matrices that have high total $k$-dimensional volume be used in compressed sensing?

Another problem of a similar flavor is that of robustness to erasures. This line of work was initiated in \cite{GKK01} (see also \cite{CK03,HP04,PC05} for some earlier work on this topic), loosely speaking this problems asks for frames that have the property that after removing a certain number of vectors the remaining set is still a pretty good frame. More specifically, in \cite{FM12} the authors define a $(k,C)$-\textit{numerically erasure robust frame} (NERF) to be a frame that has the property that after removing any set of $k$ vectors the remaining set has condition number at least $C$. For a Parseval frame we know that the singular values of any subset are bounded by 1, and for large subsets they are equal to 1 or very close to 1, so to be a good NERF we would want all of the singular values of every subset of size $M-k$ to be as big as possible. Thus it seems that the solutions to \eqref{nuke} (for $M-k$) should be very good NERFs. It also seems like the solutions to \eqref{cvmax} should have good erasure robustness properties as well, but in exactly what sense is not clear.

\section{Acknowledgements}
The second author was supported by NSF DMS 1609760 and 1906725.

\bibliographystyle{plain}
\bibliography{l1max}

\begin{thebibliography}{10}

\bibitem{ACM12}
Boris Alexeev, Jameson Cahill, and Dustin~G Mixon.
\newblock Full spark frames.
\newblock {\em Journal of Fourier Analysis and Applications}, 18(6):1167--1194,
  2012.

\bibitem{BF03}
John~J Benedetto and Matthew Fickus.
\newblock Finite normalized tight frames.
\newblock {\em Advances in Computational Mathematics}, 18(2-4):357--385, 2003.

\bibitem{BK06}
John~J Benedetto and Joseph~D Kolesar.
\newblock Geometric properties of grassmannian frames for and.
\newblock {\em EURASIP Journal on Advances in Signal Processing},
  2006(1):049850, 2006.

\bibitem{BH15}
Bernhard~G Bodmann and John Haas.
\newblock Frame potentials and the geometry of frames.
\newblock {\em Journal of Fourier Analysis and Applications}, 21(6):1344--1383,
  2015.

\bibitem{BP05}
Bernhard~G Bodmann and Vern~I Paulsen.
\newblock Frames, graphs and erasures.
\newblock {\em Linear algebra and its applications}, 404:118--146, 2005.

\bibitem{BPT09}
Bernhard~G Bodmann, Vern~I Paulsen, and Mark Tomforde.
\newblock Equiangular tight frames from complex seidel matrices containing cube
  roots of unity.
\newblock {\em Linear Algebra and its Applications}, 430(1):396--417, 2009.

\bibitem{CFMPS13}
Jameson Cahill, Matthew Fickus, Dustin~G Mixon, Miriam~J Poteet, and Nate
  Strawn.
\newblock Constructing finite frames of a given spectrum and set of lengths.
\newblock {\em Applied and Computational Harmonic Analysis}, 35(1):52--73,
  2013.

\bibitem{CMS17}
Jameson Cahill, Dustin~G Mixon, and Nate Strawn.
\newblock Connectivity and irreducibility of algebraic varieties of finite unit
  norm tight frames.
\newblock {\em SIAM Journal on Applied Algebra and Geometry}, 1(1):38--72,
  2017.

\bibitem{csag}
Jameson Cahill and Nate Strawn.
\newblock Algebraic geometry and finite frames.
\newblock In {\em Finite frames}, pages 141--170. Springer, 2013.

\bibitem{CT05}
EJ~Candes and T~Tao.
\newblock Decoding by linear programming.
\newblock {\em IEEE Transactions on Information Theory}, 51(12):4203--4215,
  2005.

\bibitem{CK03}
Peter~G Casazza and Jelena Kova{\v{c}}evi{\'c}.
\newblock Equal-norm tight frames with erasures.
\newblock {\em Advances in Computational Mathematics}, 18(2-4):387--430, 2003.

\bibitem{CK12}
Peter~G Casazza and Gitta Kutyniok.
\newblock {\em Finite frames: Theory and applications}.
\newblock Springer, 2012.

\bibitem{CHS96}
John~H Conway, Ronald~H Hardin, and Neil~JA Sloane.
\newblock Packing lines, planes, etc.: Packings in grassmannian spaces.
\newblock {\em Experimental mathematics}, 5(2):139--159, 1996.

\bibitem{DHC12}
Somantika Datta, Stephen Howard, and Douglas Cochran.
\newblock Geometry of the welch bounds.
\newblock {\em Linear Algebra and its Applications}, 437(10):2455--2470, 2012.

\bibitem{EO12}
Martin Ehler and Kasso~A Okoudjou.
\newblock Minimization of the probabilistic p-frame potential.
\newblock {\em Journal of Statistical Planning and Inference}, 142(3):645--659,
  2012.

\bibitem{EK12}
Yonina~C Eldar and Gitta Kutyniok.
\newblock {\em Compressed sensing: theory and applications}.
\newblock Cambridge university press, 2012.

\bibitem{FM12}
Matthew Fickus and Dustin~G Mixon.
\newblock Numerically erasure-robust frames.
\newblock {\em Linear Algebra and its Applications}, 437(6):1394--1407, 2012.

\bibitem{FM15}
Matthew Fickus and Dustin~G Mixon.
\newblock Tables of the existence of equiangular tight frames. arxiv e-print.
\newblock {\em arXiv}, 1504(04), 2015.

\bibitem{FMPS13}
Matthew Fickus, Dustin~G Mixon, Miriam~J Poteet, and Nate Strawn.
\newblock Constructing all self-adjoint matrices with prescribed spectrum and
  diagonal.
\newblock {\em Advances in Computational Mathematics}, 39(3-4):585--609, 2013.

\bibitem{FMT12}
Matthew Fickus, Dustin~G Mixon, and Janet~C Tremain.
\newblock Steiner equiangular tight frames.
\newblock {\em Linear algebra and its applications}, 436(5):1014--1027, 2012.

\bibitem{FR13}
Simon Foucart and Holger Rauhut.
\newblock An invitation to compressive sensing.
\newblock In {\em A mathematical introduction to compressive sensing}, pages
  1--39. Springer, 2013.

\bibitem{GKK01}
Vivek~K Goyal, Jelena Kova{\v{c}}evi{\'c}, and Jonathan~A Kelner.
\newblock Quantized frame expansions with erasures.
\newblock {\em Applied and Computational Harmonic Analysis}, 10(3):203--233,
  2001.

\bibitem{HS12}
Thomas~R Hoffman and James~P Solazzo.
\newblock Complex equiangular tight frames and erasures.
\newblock {\em Linear Algebra and its Applications}, 437(2):549--558, 2012.

\bibitem{HP04}
Roderick~B Holmes and Vern~I Paulsen.
\newblock Optimal frames for erasures.
\newblock {\em Linear Algebra and its Applications}, 377:31--51, 2004.

\bibitem{JKM19}
John Jasper, Emily~J King, and Dustin~G Mixon.
\newblock Game of sloanes: Best known packings in complex projective space.
\newblock {\em arXiv preprint arXiv:1907.07848}, 2019.

\bibitem{LAbook}
Carl~D Meyer.
\newblock {\em Matrix analysis and applied linear algebra}, volume~71.
\newblock Siam, 2000.

\bibitem{MB96}
Jianming Miao and Adi Ben-Israel.
\newblock Product cosines of angles between subspaces.
\newblock {\em Linear algebra and its applications}, 237:71--81, 1996.

\bibitem{PC05}
Markus Puschel and Jelena Kovacevic.
\newblock Real, tight frames with maximal robustness to erasures.
\newblock In {\em Data Compression Conference}, pages 63--72. IEEE, 2005.

\bibitem{S11}
Nate Strawn.
\newblock Finite frame varieties: nonsingular points, tangent spaces, and
  explicit local parameterizations.
\newblock {\em Journal of Fourier Analysis and Applications}, 17(5):821--853,
  2011.

\bibitem{SH03}
Thomas Strohmer and Robert~W Heath~Jr.
\newblock Grassmannian frames with applications to coding and communication.
\newblock {\em Applied and computational harmonic analysis}, 14(3):257--275,
  2003.

\bibitem{W18}
Shayne~FD Waldron.
\newblock {\em An introduction to finite tight frames}.
\newblock Springer, 2018.

\bibitem{W74}
Lloyd Welch.
\newblock Lower bounds on the maximum cross correlation of signals (corresp.).
\newblock {\em IEEE Transactions on Information theory}, 20(3):397--399, 1974.

\end{thebibliography}

\end{document}